\numberwithin{equation}{section}
\newtheorem{theorem}{Theorem}
\newtheorem{proposition}[theorem]{Proposition}
\newtheorem{lemma}[theorem]{Lemma}
\newcommand{\reals}{\mathbf{R}}
\title[Profile Decomposition for Hyperbolic Schr\"odinger]{The profile decomposition for the hyperbolic Schr\"odinger equation}
\author[B. Dodson]
{Benjamin Dodson}
\email{dodson@math.jhu.edu}
\address{Mathematics Department, Johns Hopkins University  \\
Baltimore, MD, USA}
\author[J.L. Marzuola]
{Jeremy L. Marzuola}
\email{marzuola@math.unc.edu}
\address{Mathematics Department, University of North Carolina \\
 Chapel Hill, NC 27599, USA}
\author[B. Pausader]
{Benoit Pausader}
\email{benoit.pausader@math.brown.edu}
\address{Mathematics Department, Brown University \\
Providence, RI, USA}
\author[D. Spirn]
{Daniel P. Spirn}
\email{spirn@math.umn.edu}
\address{Mathematics Department, University of Minnesota \\
Minneapolis, MN, USA}
\begin{document}

\subjclass{35Q55; 35Q35}

\thanks{}

\begin{abstract}
In this note, we prove the profile decomposition for hyperbolic Schr{\"o}dinger (or mixed signature) equations on $\reals^2$ in two cases, one mass-supercritical and one mass-critical.  First, as a warm up,  we show that the profile decomposition works for the ${\dot H}^{\frac12}$ critical problem. Then, we give the derivation of the profile decomposition in the mass-critical case  based on an estimate of Rogers-Vargas \cite{RV}.
\end{abstract}

\maketitle

\section{Introduction}

We will consider the hyperbolic (or mixed signature) Schr\"odinger equation on $\mathbf{R}^2$, which is given by
\begin{equation}\label{HNLS}
i \partial_t  u + \partial_{x} \partial_{y} u = |u|^{p} u, \hspace{5mm} u(x, y,0) = u_{0}(x, y).
\end{equation}
In particular, we will focus on the cases $p=4$ and $p=2$.  The case $p=2$
arises naturally in the study of modulation of wave trains in gravity water waves, see for instance \cite{T2,TW}; it is also a natural component of the Davey-Stewartson system \cite{LP,SS}.  As can be observed quickly from the nature of the dispersion relation, the linear problem
\begin{equation}\label{HLS}
i \partial_tu + \partial_{x} \partial_{y} u = 0, \hspace{5mm} u(x,y,0) = u_{0}(x, y).
\end{equation}
satisfies the same Strichartz estimates and rather similar local smoothing estimates\footnote{See \cite{Chihara,RS} for a general treatment of smoothing estimates for dispersive equations with non-elliptic symbols.} as its elliptic counterpart, the standard Schr\"odinger equation. In particular, there exists a constant $C<\infty$ such that
\begin{equation}\label{LinStric}
\Vert e^{it\partial_{x}\partial_{y}}f\Vert_{L^4_{x,y,t}}\le C\Vert f\Vert_{L^2_{x,y}}.
\end{equation}
Hence, large data local in time well-posedness and global existence for small data with $p\geq2$ can be observed using standard methods that can be found in classical texts such as \cite{Cazenave,SS}.  For quasilinear problems with mixed signature, some local well-posedness results have been developed recently, see \cite{KPRV1,MMT3}.  Non-existence of bound states was established in \cite{GS} and a class of bound states that are not in $L^2$ were constructed in \cite{KNZ}.

Long time low regularity theory for this equation at large data remains unknown however.  Recently, an approach to global existence for sufficiently regular solutions was taken in \cite{T1}, but it is conjectured  that \eqref{HNLS} should have global well-posedness and scattering for all initial data in $L^2$. Much progress has been made recently in proving global well-posedness and scattering for various critical and supercritical dispersive equations by applying concentration compactness tools, which originated with the works of Lions \cite{Lions1,Lions2}.  One major step in applying modern concentration compactness tools to dispersive equations is the profile decomposition, see \cite{KM1,KV}.  The idea is that given a small data global existence result, one proves that if the large data result is false then there is a critical value of norm of the initial data at which for instance a required integral fails to be finite.  Then, the profile decomposition ensures that failure occurs because of an almost periodic critical element, which may then be analyzed further and in ideal settings ruled out completely.  See \cite{Dod1,Dod2} and references therein for applications of this idea in the setting of focusing and defocusing Schr\"odinger equations for instance.

A major breakthrough in profile decompositions arose in the works of G\'erard \cite{G1}, Merle-Vega \cite{MerleVega}, Bahouri-G\'erard \cite{BG}, Gallagher \cite{Gal1} and Keraani \cite{Ker1}.  Those results have then been used to understand how to prove results about scattering, blow-up and global well-posedness in many settings, see \cite{KV} for some examples. We also mention the recent work by Fanelli-Visciglia \cite{FV}, where they consider profile decompositions in mass super-critical problems for a variety of operators, including \eqref{HNLS}.

As can be seen in \cite[Section 4.4]{KV}, the profile decomposition follows from refined bilinear Strichartz estimates. Using refined Strichartz estimates from \cite{MVV} and bilinear Strichartz estimates, Bourgain \cite{Bou1} proved concentration estimates and global well-posedness in $H^{3/5 + \epsilon}$ for the defocusing, cubic elliptic nonlinear Schr{\"o}dinger equation in $\reals^2$. Building on this work, Merle-Vega \cite{MerleVega} proved a profile decomposition for the mass-critical elliptic nonlinear Schr{\"o}dinger equation in two dimensions.

For the hyperbolic NLS, Lee, Vargas and Rogers-Vargas \cite{Lee,RV,V1} have provided refined linear and bilinear estimates, drawing  on results of Tao \cite{Tao1} for the elliptic Schr\"odinger equation. In particular \cite{RV} gives an improved Strichartz estimate similar to our Proposition \ref{PD1} and uses it to prove lower bounds on concentration of mass at blow-up. An improved Strichartz estimate is also the key element in our profile decomposition, following the standard machinery described in \cite[Section 4.4]{KV}. For completeness, we provide a proof of Proposition \ref{PD1}, which, although drawing on similar ideas as in \cite{RV}, outlines more explicitly the additional orthogonality of rectangles with skewed ratios.

The major issue in following the standard proof of the profile decomposition is that the mixed signature nature of \eqref{HNLS} means that an essential bilinear interaction estimate that holds in the elliptic case fails.  This is compensated for in \cite{V1} by making a required orthogonality assumption for the refined bilinear Strichartz to hold (see the statement in Lemma \ref{BS2} below). To overcome this difficulty, we use a double Whitney decomposition to precisely identify the right scales, which introduces many different rectangles that are controlled using the fact that functions with support on two rectangles of different aspect ratios have small bilinear interactions.  We note that while we here focus on analysis in $2$ dimensions to keep the technical computations focused and directed, we expect many of the calculations to be generalizable to other dimensions as in \cite{KV}.

The paper is structured as follows: in Section \ref{props}, we set up the problem, discuss some basic symmetries and establish some important bilinear estimates; in Section \ref{super} we establish the result in the mass-supercritical case using the extra compactness that comes from the Sobolev embedding; in Section \ref{crit}, we establish the main precise Strichartz estimate in the paper and in Section \ref{SecMBUS}, we obtain the profile decomposition for the mass-critical problem and deduce the existence of a minimal blow-up solution. Finally, we conclude with a short Appendix in which we correct an error in the published version of this article.

{\sc Acknowledgments.} {The first author was supported in part by
  U.S. NSF Grant DMS - 1500424  The second author was supported in
  part by U.S. NSF Grants DMS--1312874 and DMS-1352353. The third author was supported in part by
  U.S. NSF Grant DMS--1558729 and a Sloane Research fellowship.  The fourth author was supported in part by
  U.S. NSF Grant DMS-1516565.  We wish to
  thank Andrea Nahmod, Klaus Widmayer, Daniel Tataru, Nathan Totz for helpful conversations during the
  production of this work. Part of this work was initiated when some of the authors were at the Hausdorff Research Institute for Mathematics in Bonn, then progressed during visits to the Institut des Hautes \'Etudes in Paris, the Mathematical Sciences Research Institute in Berkeley and the Institute for Mathematics and Applications in Minneapolis.  The authors would like to thank these institutions for hosting subsets of them at various times in the last several years.  
  
We thank also an anonymous referee who pointed out the results of Rogers-Vargas \cite{RV} and its relevance for this work and Emanuel Carneiro for pointing out that error in the original appendix and sharing his manuscript on optimizers for \eqref{LinStric}, \cite{COS}.
  }

\section{Properties of \eqref{HNLS}}
\label{props}

\noindent Observe that a solution to 

\begin{equation}\label{1.1}
i \partial_tu + \partial_{x} \partial_{y} u = |u|^{2} u, \hspace{5mm} u(x, y,t) = u_{0}(x, y),
\end{equation}

\noindent has a number of symmetries:

\begin{enumerate}
\item Translation: for any $(x_{0}, y_{0}) \in \mathbf{R}^{2}$,
\begin{equation}\label{1.2}
u(x,y,t) \mapsto u(x - x_{0}, y - y_{0},t),
\end{equation}

\item Modulation: for any $\theta \in \mathbf{R}$,
\begin{equation}\label{1.5}
u(x,y,t) \mapsto e^{i \theta} u(x,y,t).
\end{equation}

\item Scaling: for any $\lambda_{1}, \lambda_{2} > 0$,
\begin{equation}\label{1.3}
u(x,y,t) \mapsto \sqrt{\lambda_{1} \lambda_{2}} u(\lambda_{1} x, \lambda_{2} y,\lambda_{1} \lambda_{2} t),
\end{equation} 

\item Galilean symmetry: for $(\xi_{1}, \xi_{2}) \in \mathbf{R}^{2}$,
\begin{equation}\label{1.4}
u(x,y,t) \mapsto e^{-it \xi_{1} \xi_{2}} e^{i[x \xi_{1} + y \xi_{2}]} u(x - \xi_{1} t, y - \xi_{2} t,t).
\end{equation}

\item Pseudo-conformal symmetry:
\begin{equation}\label{1.5_1}
u(x,y,t) \mapsto \frac{e^{i\frac{xy}{t}}}{it}\overline{u}(\frac{x}{t},\frac{y}{t},\frac{1}{t}).
\end{equation}

\end{enumerate}

\noindent These symmetries all preserve the $L^{2}_{x,y}$ norm. The first two symmetries $\eqref{1.2}$-$\eqref{1.5}$, as well as the scaling symmetry properly redefined, also preserve the $\dot{H}_{h}^{s}$ norm for any $s \in \mathbf{R}$, where

\begin{equation*}
\begin{split}
\Vert f\Vert_{\dot{H}_h^s}^2&=\Vert \vert\partial_x\vert^\frac{s}{2}\vert \partial_y\vert^\frac{s}{2}f\Vert_{L^2}^2.
\end{split}
\end{equation*}

\noindent Note that this norm has similar scaling laws as the more usual $\dot{H}^{s}$ norm.  Other examples of anisotropic equations have appeared in for instance \cite{S-T,Iftimie}. For example, for the $\dot{H}^{1/2}$ - critical problem

\begin{equation}\label{1.6}
i \partial_tu + \partial_{x} \partial_{y} u = |u|^{4} u, \hspace{5mm} u(x,y,0) = u_{0}(x, y),
\end{equation}

\noindent the symmetries are thus:

\begin{enumerate}
\item Translation: for any $(x_{0}, y_{0}) \in \mathbf{R}^{2}$,

\begin{equation}\label{1.7}
u(x,y,t) \mapsto u(x - x_{0}, y - y_{0},t),
\end{equation}

\item Scaling: for any $\lambda_{1}, \lambda_{2} > 0$,

\begin{equation}\label{1.8}
u(x,y,t) \mapsto (\lambda_{1} \lambda_{2})^{1/4} u(\lambda_{1} x, \lambda_{2} y,\lambda_{1} \lambda_{2} t),
\end{equation}

\item Modulation: for any $\theta \in \mathbf{R}$,

\begin{equation}\label{1.9}
u(x,y,t) \mapsto e^{i \theta} u(x,y,t).
\end{equation}
\end{enumerate}

\noindent We will treat the profile decomposition for $\eqref{1.6}$ as a warm-up, before tackling the profile decomposition for the mass-critical problem $\eqref{1.1}$.

\subsection{Notations}

Let $\varphi$ be a usual smooth bump function such that $\varphi(x)=1$ when $\vert x\vert\le 1$ and $\varphi(x)=0$ when $\vert x\vert\ge 2$. We also let
\begin{equation*}
\psi(x)=\varphi(x)-\varphi(2x).
\end{equation*}

We will often consider various projections in Fourier space. Given a rectangle $R=R(c,\ell_x,\ell_y)$, centered at $c=(c_x,c_y)$ and with sides parallel to the axis of length $2\ell_x$ and $2\ell_y$, we define
\begin{equation}
\begin{split}
\varphi_{R}(x,y)=\varphi(\ell_x^{-1}(x-c_x))\varphi(\ell_y^{-1}(y-c_y)).
\end{split}
\label{phiRdef}
\end{equation}
We define the operators
\begin{equation*}
\begin{split}
\widehat{Q_{M,N}f}(\xi,\eta)&=\psi(M^{-1}\xi)\psi(N^{-1}\eta)\widehat{f}(\xi,\eta),\\
\widehat{P_Rf}(\xi,\eta)&=\varphi_{R}(\xi,\eta)\widehat{f}(\xi,\eta).
\end{split}
\end{equation*}
The first operator is only sensitive to the scales involved, while the second also accounts for the location in Fourier space.
We also let $\vert R\vert=4\ell_x\ell_y$ denote its area.

\subsection{Some Preliminary Estimates}

We start with a nonisotropic version of the Sobolev embedding.
\begin{lemma}\label{Sob}

There holds that
\begin{equation*}
\Vert f\Vert_{L^q_{x,y}}\lesssim \Vert \vert\partial_x\vert^\frac{s}{2}\vert\partial_y\vert^\frac{s}{2}f\Vert_{L^p_{x,y}}
\end{equation*}
whenever $1<p\le q<\infty$, $0\le s<1$ and
\begin{equation*}
\frac{1}{q}=\frac{1}{p}-\frac{s}{2}
\end{equation*}
\end{lemma}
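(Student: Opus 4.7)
The plan is to reduce to the classical one-dimensional Hardy--Littlewood--Sobolev inequality applied separately in each variable, bridging the two applications by Minkowski's inequality. The case $s=0$ is trivial (the inequality reduces to $\|f\|_{L^p}\lesssim \|f\|_{L^p}$), so I will focus on $0<s<1$, in which case the hypothesis $p\le q$ is automatically strict and the exponent condition forces $1<p<q<\infty$.

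The key building block is the one-dimensional fractional Sobolev embedding: for $0<\alpha<1$ and $1<p<q<\infty$ with $\tfrac{1}{q}=\tfrac{1}{p}-\alpha$, one has $\|g\|_{L^q(\mathbb{R})}\lesssim \||\partial|^{\alpha}g\|_{L^p(\mathbb{R})}$. This follows by writing $g=|\partial|^{-\alpha}(|\partial|^{\alpha}g)$ and applying Hardy--Littlewood--Sobolev in 1D with the Riesz kernel $|x|^{\alpha-1}$. Applied with $\alpha=s/2$, this is exactly the embedding that will appear in each coordinate slice.

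I would then proceed in three steps. First, for each fixed $y$, apply the 1D embedding in the $x$-variable to obtain
\begin{equation*}
\|f(\cdot,y)\|_{L^q_x} \lesssim \| |\partial_x|^{s/2} f(\cdot,y)\|_{L^p_x},
\end{equation*}
and integrate the $q$-th power in $y$ to get $\|f\|_{L^q_{x,y}}\lesssim \|G\|_{L^q_y L^p_x}$, where I set $G:=|\partial_x|^{s/2}f$. Second, since $q\ge p$, Minkowski's integral inequality lets me swap the two norms: $\|G\|_{L^q_y L^p_x}\le \|G\|_{L^p_x L^q_y}$. Third, apply the 1D embedding in the $y$-variable slicewise (for fixed $x$) to obtain
\begin{equation*}
\|G(x,\cdot)\|_{L^q_y}\lesssim \| |\partial_y|^{s/2} G(x,\cdot)\|_{L^p_y},
\end{equation*}
and take the $L^p_x$ norm to conclude $\|G\|_{L^p_x L^q_y}\lesssim \||\partial_x|^{s/2}|\partial_y|^{s/2}f\|_{L^p_{x,y}}$. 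Chaining the three bounds yields the desired estimate.

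There is no real obstacle: the argument is a standard tensorization, and all hypotheses of HLS in 1D and of Minkowski are verified by the assumptions on $p,q,s$. The only point deserving a brief check is the scaling consistency, which one sees from $|\partial_x|^{s/2}|\partial_y|^{s/2}(f(\lambda_1\cdot,\lambda_2\cdot))=\lambda_1^{s/2}\lambda_2^{s/2}\bigl(|\partial_x|^{s/2}|\partial_y|^{s/2}f\bigr)(\lambda_1\cdot,\lambda_2\cdot)$, which matches the $L^p\to L^q$ scaling precisely because the exponent condition is $\tfrac{1}{q}=\tfrac{1}{p}-\tfrac{s}{2}$ rather than $\tfrac{1}{p}-s$; this is a slight subtlety worth noting, since one might naively expect two half-derivatives to produce a full $s$-worth of gain, but the anisotropic tensor structure splits the gain between the two $L^p\to L^q$ steps and the Minkowski interchange effectively reuses the $x$-gain instead of compounding it.
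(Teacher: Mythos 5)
Your proof is correct and is essentially identical to the paper's: one-dimensional fractional Sobolev embedding in $x$ slicewise, Minkowski's inequality (valid since $q\ge p$) to interchange the mixed norms, and the one-dimensional embedding again in $y$. No further comment is needed.
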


\begin{proof}[Proof of Lemma \ref{Sob}]

The proof, although easy, highlights the need to treat each direction independently. Using Sobolev embedding in $1d$, Minkowski inequality and Sobolev again, we obtain that
\begin{equation*}
\begin{split}
\Vert f\Vert_{L^{q}_x(\mathbf{R},L^{q}_y(\mathbf{R}))}&\lesssim \Vert \vert\partial_x\vert^\frac{s}{2}f\Vert_{L^{q}_y(\mathbf{R},L^p_x(\mathbf{R}))}\lesssim \Vert \vert\partial_x\vert^\frac{s}{2}f\Vert_{L^{p}_x(\mathbf{R},L^q_y(\mathbf{R}))}  \\
& \lesssim \Vert \vert\partial_y\vert^\frac{s}{2}\vert\partial_x\vert^\frac{s}{2}f\Vert_{L^p_{x,y}}
\end{split}
\end{equation*}
which is what we wanted.
\end{proof}

We have two basic refinements of \eqref{LinStric}. Note the difference in orthogonality requirements between Lemma \ref{BS1} and Lemma \ref{BS2}.
\begin{lemma}\label{BS1}
Assume that $f=P_{R_1}f$ and $g=P_{R_2}g$ where $R_i=R(c^i,\ell_x,\ell_y)$ and $\vert c^1_x-c^2_x\vert = N\ge 4\ell_x$, and let $u$ (resp. $v$) be a solution of \eqref{HLS} with initial data $f$ (resp. $g$). Then
\begin{equation}\label{BSRes1}
\Vert uv\Vert_{L^2_{x,y,t}}\lesssim \left(\frac{\ell_x}{N}\right)^\frac{1}{2}\Vert f\Vert_{L^2_{x,y}}\Vert g\Vert_{L^2_{x,y}}.
\end{equation}

\end{lemma}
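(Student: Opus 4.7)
The plan is to run the standard Plancherel + Cauchy--Schwarz argument for bilinear Strichartz, where the main task is to estimate the ``resonance measure'' of the level set of the phase. Since $\widehat{e^{it\partial_x\partial_y}f}(\xi,\eta)=e^{-it\xi\eta}\widehat{f}(\xi,\eta)$, applying the Fourier transform in $x,y,t$ to $uv$ and integrating out $\xi_3=\xi-\xi_1$, $\eta_3=\eta-\eta_1$ yields
\begin{equation*}
\mathcal{F}_{x,y,t}(uv)(\xi,\eta,\tau)=C\int \widehat{f}(\xi_1,\eta_1)\widehat{g}(\xi-\xi_1,\eta-\eta_1)\,\delta\bigl(\tau-\Phi\bigr)\,d\xi_1 d\eta_1,
\end{equation*}
where $\Phi(\xi_1,\eta_1;\xi,\eta)=\xi_1\eta_1+(\xi-\xi_1)(\eta-\eta_1)$. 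By Cauchy--Schwarz and Plancherel,
\begin{equation*}
\Vert uv\Vert_{L^2_{x,y,t}}^2\lesssim \Bigl(\sup_{(\xi,\eta,\tau)}\mu(\xi,\eta,\tau)\Bigr)\cdot \Vert f\Vert_{L^2}^2\Vert g\Vert_{L^2}^2,
\end{equation*}
with
\begin{equation*}
\mu(\xi,\eta,\tau)=\int \mathbf{1}_{R_1}(\xi_1,\eta_1)\,\mathbf{1}_{R_2}(\xi-\xi_1,\eta-\eta_1)\,\delta(\tau-\Phi)\,d\xi_1 d\eta_1,
\end{equation*}
so it remains to show $\sup\mu\lesssim \ell_x/N$.

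To analyze the level set $\{\Phi=\tau\}$, I would translate via $u=\xi_1-\xi/2$, $v=\eta_1-\eta/2$; a direct expansion gives the clean identity $\Phi=\tfrac{\xi\eta}{2}+2uv$. Thus the resonance manifold is the hyperbola $uv=C$ with $C=\tfrac{1}{2}(\tau-\tfrac{\xi\eta}{2})$. Integrating out $v$ in the delta contributes a Jacobian $1/|2u|$, so
\begin{equation*}
\mu(\xi,\eta,\tau)\le \int_{I(\xi)} \frac{du}{2|u|},
\end{equation*}
where $I(\xi)$ is the set of admissible $u$ coming from the two box constraints on $\xi_1$ and $\xi-\xi_1$.

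For fixed $\xi$, the two constraints $|\xi_1-c^1_x|\le \ell_x$ and $|\xi-\xi_1-c^2_x|\le \ell_x$ confine $\xi_1$ to an interval of length $\le 2\ell_x$. Writing $\xi=c^1_x+c^2_x+\beta$ with $|\beta|\le 2\ell_x$, an elementary check shows $u$ lies in an interval of length $2\ell_x-|\beta|\le 2\ell_x$ centered at $(c^1_x-c^2_x)/2$. Using the separation hypothesis $|c^1_x-c^2_x|=N\ge 4\ell_x$, every such $u$ satisfies $|u|\ge N/2-\ell_x\ge N/4$. Combining,
\begin{equation*}
\mu(\xi,\eta,\tau)\le \frac{1}{2\cdot N/4}\cdot 2\ell_x=\frac{4\ell_x}{N},
\end{equation*}
and substituting back into the Plancherel estimate yields the desired bound.

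The computation is essentially the hyperbolic analogue of the Bourgain bilinear Strichartz argument; the one step that requires a little care is the change of variables showing $\Phi=\tfrac{\xi\eta}{2}+2uv$, since the hyperbolic symbol $\xi\eta$ produces a hyperbola rather than an ellipse as the level set, so the denominator in the Jacobian is $|u|$ (a \emph{one-sided} separation of $\xi_1$ from $\xi/2$) rather than a full $|\nabla\Phi|$. That is precisely why only separation in the $x$-variable (of size $N$) is needed, and why separation in $\eta$ plays no role. No other obstacles are anticipated.
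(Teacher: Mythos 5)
Your proposal is correct and is essentially the paper's own argument: the resonance-measure bound $\sup\mu\lesssim \ell_x/N$ obtained from the Jacobian $|2u|=|2\xi_1-\xi|\simeq N$ together with the $\xi_1$-support of width $O(\ell_x)$ is exactly the paper's change of variables $(\xi_1,\eta_1)\mapsto(\xi_1,\omega)$ with $|J|=|(\xi-\xi_1)-\xi_1|\simeq N$ followed by Plancherel in $t$ and Cauchy--Schwarz. The only cosmetic difference is that you phrase the level-set computation via the identity $\Phi=\tfrac{\xi\eta}{2}+2uv$ rather than carrying $\omega$ as a coordinate.
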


\begin{lemma}\label{BS2}

Assume that $f=P_{R_1}f$ and $g=P_{R_2}g$ where $R_i=R(c^i,\ell_x,\ell_y)$, $\vert c^1_x-c^2_x\vert\ge 4  \ell_x$ and $\vert c^1_y-c^2_y\vert\ge 4\ell_y$, and let $u$ (resp. $v$) be a solution of \eqref{HLS} with initial data $f$ (resp. $g$). Then
\begin{equation*}
\Vert uv\Vert_{L^q_{x,y,t}}\lesssim (\ell_x\ell_y)^{1-\frac{2}{q}}\Vert f\Vert_{L^2_{x,y}}\Vert g\Vert_{L^2_{x,y}}
\end{equation*}
whenever $q>5/3$.
\end{lemma}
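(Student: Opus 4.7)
The plan is to derive Lemma~\ref{BS2} from the bilinear restriction theorem for the hyperbolic paraboloid $\{\tau=\xi\eta\}\subset\mathbb{R}^3$, established by Vargas~\cite{V1} and Lee~\cite{Lee} following Tao's elliptic argument~\cite{Tao1}. The proof proceeds in three stages: reduce to a canonical unit scale, apply the bilinear restriction, and scale back.

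\emph{Normalization.} First I would apply the anisotropic $L^2$--scaling~\eqref{1.3} with $\lambda_1=\ell_x^{-1}$, $\lambda_2=\ell_y^{-1}$. This preserves~\eqref{HLS} and all $L^2$ norms, sends the rectangle $R_i=R(c^i,\ell_x,\ell_y)$ to the unit rectangle $\tilde R_i=R(\tilde c^i,1,1)$, and converts the hypotheses into $|\tilde c^1_x-\tilde c^2_x|\ge 4$ and $|\tilde c^1_y-\tilde c^2_y|\ge 4$. A direct computation yields $\|\tilde u\tilde v\|_{L^q_{x,y,t}}=(\ell_x\ell_y)^{2/q-1}\|uv\|_{L^q_{x,y,t}}$, so it suffices to establish
\begin{equation*}
\|\tilde u\tilde v\|_{L^q_{x,y,t}}\lesssim \|\tilde f\|_{L^2}\|\tilde g\|_{L^2}
\end{equation*}
in the normalized, unit-separated configuration.

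\emph{Bilinear restriction on the saddle.} In the normalized setting, the symbol $\phi(\xi,\eta)=\xi\eta$ has gradient $(\eta,\xi)$ and Hessian of signature $(+,-)$. The difference of the gradients across the two unit pieces equals $(\tilde c^1_y-\tilde c^2_y,\tilde c^1_x-\tilde c^2_x)$, and by hypothesis both of its components are $\gtrsim 1$. This is exactly the two-directional transversality condition of the bilinear restriction theorem for the hyperbolic paraboloid (Vargas~\cite{V1}, Lee~\cite{Lee}), whose conclusion is the displayed inequality above for every $q>5/3$---the sharp Tao range $q>(d+3)/(d+1)$ in spatial dimension $d=2$. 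Multiplying back by the scaling factor $(\ell_x\ell_y)^{1-2/q}$ restores the original scales and gives the lemma.

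\emph{Main obstacle.} For the elliptic paraboloid, Tao's bilinear restriction~\cite{Tao1} requires only one-directional separation because the Gaussian curvature has uniform sign. On the saddle $\tau=\xi\eta$ the two principal curvatures have opposite signs, so a single direction of separation can still leave the bilinear interaction essentially characteristic along one of the asymptotic lines of the saddle, and the curvature-based gain is lost. Imposing separation in \emph{both} coordinates, as in Lemma~\ref{BS2}, is precisely the orthogonality assumption under which Vargas's theorem recovers the full sharp range $q>5/3$; this is why the hypothesis here is stronger than in Lemma~\ref{BS1}, and, as noted in the introduction, it is exactly this gap that the double Whitney decomposition is designed to navigate when applying Lemma~\ref{BS2} in the profile decomposition.
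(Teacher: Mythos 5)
Your proposal is correct and follows exactly the route the paper takes: the paper's entire proof of Lemma \ref{BS2} is the remark that the cube case is essentially in Vargas \cite{V1} (with the two-directional separation as the transversality hypothesis for the saddle) and that the rectangle case follows by scaling cubes to rectangles. Your anisotropic rescaling with $\lambda_1=\ell_x^{-1}$, $\lambda_2=\ell_y^{-1}$ and the resulting factor $(\ell_x\ell_y)^{1-2/q}$ are the correct bookkeeping for that reduction.
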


Lemma \ref{BS2} is the main refined bilinear estimate and appears essentially in \cite{V1} when dealing with cubes.   The result as stated here follows by scaling rectangles to cubes.

\begin{proof}[Proof of Lemma \ref{BS1}] We simply write that
\begin{equation*}
\begin{split}
\widehat{u^2}(\xi,\eta,t)&=I(\xi,\eta,t),\\
I(\xi,\eta,t)&=\iint_{\mathbf{R}}e^{-it\omega}\varphi_{R_1}(\xi_1,\eta_1)\varphi_{R_2}(\xi-\xi_1,\eta-\eta_1)  \\
& \hspace{2cm} \times \widehat{f}(\xi_1,\eta_1)\widehat{g}(\xi-\xi_1,\eta-\eta_1)d\xi_1 d\eta_1,\\
\omega&=\xi_1\eta_1+(\xi-\xi_1)(\eta-\eta_1)
\end{split}
\end{equation*}
we may now change variable in the integral
\begin{equation}\label{ChangeofVar}
\begin{split}
(\xi_1,\eta_1)\mapsto (\xi_1,\omega),\qquad J:=\frac{\partial(\xi_1,\omega)}{\partial(\xi_1,\eta_1)}=\begin{pmatrix}1&0\\2\eta_1-\eta&2\xi_1-\xi\end{pmatrix}
\end{split}
\end{equation}
and in particular, we remark that
\begin{equation}\label{JSize}
\vert J\vert=\vert (\xi-\xi_1)-\xi_1\vert \simeq N,
\end{equation}
so that
\begin{equation*}
\begin{split}
I(\xi,\eta,t) = & \iint_{\mathbf{R}}e^{-it\omega}\varphi_{R_1}(\xi_1,\eta_1)\varphi_{R_2}(\xi-\xi_1,\eta-\eta_1) \\
& \hspace{1cm} \times \widehat{f}(\xi_1,\eta_1)\widehat{g}(\xi-\xi_1,\eta-\eta_1)\cdot J^{-1} d\xi_1 d\omega,\\
\eta_1&=\eta_1(\xi_1,\omega;\xi,\eta). \\
\end{split}
\end{equation*}
Taking into consideration the Fourier transform in time and using Plancherel, followed by Cauchy-Schwarz, we find that
\begin{equation*}
\begin{split}
& \Vert I(\xi,\eta,\cdot)\Vert_{L^2_t}^2=  \\
& \int_{\mathbf{R}}\left\vert\int_{\mathbf{R}}\varphi_{R_1}(\xi_1,\eta_1)\varphi_{R_2}(\xi-\xi_1,\eta-\eta_1)\widehat{f}(\xi_1,\eta_1)\widehat{g}(\xi-\xi_1,\eta-\eta_1)\cdot J^{-1} d\xi_1\right\vert^2 d\omega\\
&\le\sup_{\xi,\eta,\eta_1}\int_{\mathbf{R}}\varphi_{R_1}(\xi_1,\eta_1)\varphi_{R_2}(\xi-\xi_1,\eta-\eta_1)d\xi_1\times\\
&\quad\int_{\mathbf{R}}\int_{\mathbf{R}}\varphi_{R_1}(\xi_1,\eta_1)\varphi_{R_2}(\xi-\xi_1,\eta-\eta_1)\left\vert\widehat{f}(\xi_1,\eta_1)\widehat{g}(\xi-\xi_1,\eta-\eta_1)\right\vert^2\cdot J^{-2} d\xi_1 d\omega.\\
\end{split}
\end{equation*}
Now, we use the fact that $R_1$ has width $\ell_x$, together with \eqref{JSize} to obtain, after undoing the change of variables, that
\begin{equation*}
\begin{split}
\Vert I(\xi,\eta,\cdot)\Vert_{L^2_t}^2&\lesssim\frac{\ell_x}{N}\int_{\mathbf{R}}\int_{\mathbf{R}}\varphi_{R_1}(\xi_1,\eta_1)\varphi_{R_2}(\xi-\xi_1,\eta-\eta_1) \\ 
& \hspace{2cm} \left\vert\widehat{f}(\xi_1,\eta_1)\widehat{g}(\xi-\xi_1,\eta-\eta_1)\right\vert^2\cdot J^{-1} d\xi_1 d\omega\\
&\lesssim \frac{\ell_x}{N}\int_{\mathbf{R}}\int_{\mathbf{R}} \left\vert\widehat{f}(\xi_1,\eta_1)\widehat{g}(\xi-\xi_1,\eta-\eta_1)\right\vert^2 d\xi_1 d\eta_1.\\
\end{split}
\end{equation*}
Integrating with respect to $(\xi,\eta)$, we then obtain \eqref{BSRes1}.
\end{proof}


We will in fact use the following consequence of Lemma \ref{BS2}.
\begin{lemma}\label{LemBSE3}

Under the assumptions of Lemma \ref{BS2}, it holds that
\begin{equation}\label{BSE3}
\Vert uv  \Vert_{L^\frac{40}{21}_{x,y,t}}\lesssim (\ell_x\ell_y)^{-\frac{3}{20}}\Vert\widehat{f}\Vert_{L^\frac{20}{11}_{x,y}}\Vert \widehat{g}\Vert_{L^\frac{20}{11}_{x,y}}.
\end{equation}

\end{lemma}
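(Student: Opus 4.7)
The plan is to obtain \eqref{BSE3} by bilinear complex interpolation between Lemma \ref{BS2} (at a suitable exponent) and the trivial dispersive bound, viewing $(f,g)\mapsto uv$ as a bilinear map acting on the pair $(\widehat f,\widehat g)$ of functions supported on $R_1,R_2$ respectively.

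For the $L^\infty$ endpoint, writing $u$ as an inverse Fourier transform and applying the triangle inequality yields $\|u\|_{L^\infty_{t,x,y}}\le\|\widehat f\|_{L^1}$, and likewise for $v$, so that $\|uv\|_{L^\infty_{t,x,y}}\le\|\widehat f\|_{L^1}\|\widehat g\|_{L^1}$; note that no separation hypothesis is needed here. For the other endpoint I apply Lemma \ref{BS2} at the exponent $q=12/7$ (which satisfies $q>5/3$) and use Plancherel $\|f\|_{L^2}=\|\widehat f\|_{L^2}$ to rewrite
\[
\|uv\|_{L^{12/7}_{t,x,y}}\lesssim(\ell_x\ell_y)^{1-7/6}\|\widehat f\|_{L^2}\|\widehat g\|_{L^2}=(\ell_x\ell_y)^{-1/6}\|\widehat f\|_{L^2}\|\widehat g\|_{L^2}.
\]

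I then invoke bilinear complex interpolation (via the three-line lemma applied to the trilinear form $(\widehat f,\widehat g,h)\mapsto\iiint uv\,h\,dt\,dx\,dy$) on the pair of bounds $L^1\times L^1\to L^\infty$ with norm $\le 1$ and $L^2\times L^2\to L^{12/7}$ with norm $\lesssim(\ell_x\ell_y)^{-1/6}$. At parameter $\theta=9/10$ one checks
\[
\frac{1}{p_\theta}=(1-\theta)+\frac{\theta}{2}=\frac{11}{20},\qquad\frac{1}{q_\theta}=\frac{7\theta}{12}=\frac{21}{40},
\]
so $p_\theta=20/11$ and $q_\theta=40/21$, while the interpolated constant becomes $1^{1/10}\cdot(\ell_x\ell_y)^{-9/60}=(\ell_x\ell_y)^{-3/20}$, which is precisely \eqref{BSE3}. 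There is essentially no obstacle: the choices $q=12/7$ and $\theta=9/10$ are forced by the requirement that interpolation produce exactly the target exponents $(20/11,40/21)$, and the Fourier support hypotheses on $f,g$ are preserved throughout, since $R_1,R_2$ are fixed independently of which $L^p$ norm is applied to $\widehat f,\widehat g$.
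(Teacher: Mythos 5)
Your proof is correct and follows essentially the same route as the paper: both use Lemma \ref{BS2} at $q=12/7$ together with Plancherel to get the $L^2\times L^2\to L^{12/7}$ bound with constant $(\ell_x\ell_y)^{-1/6}$, combine it with the trivial $L^1\times L^1\to L^\infty$ bound, and conclude by bilinear interpolation. Your version merely makes explicit the interpolation parameter $\theta=9/10$ and the resulting exponents, which the paper leaves implicit.
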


\begin{proof} Indeed, using Lemma \ref{BS2}, we find that
\begin{equation*}
\Vert e^{it \partial_x \partial_y}f\cdot e^{it \partial_x \partial_y }g\Vert_{L^\frac{12}{7}_{x,y,t}}\lesssim (\ell_x\ell_y)^{-\frac{1}{6}}\Vert\widehat{f}\Vert_{L^2_{x,y}}\Vert \widehat{g}\Vert_{L^2_{x,y}},
\end{equation*}
while a crude estimate gives that
\begin{equation*}
\begin{split}
\Vert e^{it\partial_x \partial_y}f\cdot e^{it\partial_x \partial_y}g\Vert_{L^\infty_{x,y,t}}\lesssim \Vert\widehat{f}\Vert_{L^1_{x,y}}\Vert \widehat{g}\Vert_{L^1_{x,y}}.
\end{split}
\end{equation*}
Interpolation gives \eqref{BSE3}.
\end{proof}

Another tool we will need in the profile decomposition is the following local smoothing result which is essentially equivalent to Lemma \ref{BS1}:

\begin{lemma}\label{LocSmoothing}
Let $\phi\in L^2_{x,y}$. There holds that
\begin{equation*}
\begin{split}
\sup_x\Vert Q_{M,N}e^{it\partial_x\partial_y}\phi(x,\cdot)\Vert_{L^2_{y,t}}&\lesssim N^{-\frac{1}{2}}\Vert \phi\Vert_{L^2_{x,y}},\\
\sup_y\Vert Q_{M,N}e^{it\partial_x\partial_y}\phi(\cdot,y)\Vert_{L^2_{x,t}}&\lesssim M^{-\frac{1}{2}}\Vert \phi\Vert_{L^2_{x,y}}.\\
\end{split}
\end{equation*}
\end{lemma}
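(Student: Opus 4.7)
The plan is to reduce both estimates to a direct Plancherel computation by exploiting the fact that the symbol $\omega(\xi,\eta)=-\xi\eta$ of $e^{it\partial_x\partial_y}$ is linear in each variable separately, so that the dispersion surface can be parametrized by either frequency variable with a controlled Jacobian.

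First, fix $x\in\reals$ and set $u(y,t):=Q_{M,N}e^{it\partial_x\partial_y}\phi(x,y)$. Writing out the Fourier representation
\begin{equation*}
u(y,t)=\frac{1}{(2\pi)^2}\iint e^{ix\xi+iy\eta-it\xi\eta}\phi(M^{-1}\xi)\phi(N^{-1}\eta)\widehat{\phi}(\xi,\eta)\,d\xi\,d\eta,
\end{equation*}
I would take the spacetime Fourier transform in $(y,t)$ with dual variables $(\eta',\tau)$. The integration in $t$ produces a $\delta(\tau+\xi\eta')$, which pins $\xi=-\tau/\eta'$, so the resulting formula for $\widetilde{u}(\eta',\tau)$ contains the factor $|\eta'|^{-1}$ coming from evaluating the $\delta$.

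Next, I would apply Plancherel in $(y,t)$ and change variables in $(\eta',\tau)$ back to $(\xi,\eta)$ via $(\xi,\eta)=(-\tau/\eta',\eta')$, whose Jacobian is $|\eta'|$. This gives, schematically,
\begin{equation*}
\|u\|_{L^2_{y,t}}^2\lesssim \iint \frac{1}{|\eta|}\bigl|\phi(M^{-1}\xi)\bigr|^2\bigl|\phi(N^{-1}\eta)\bigr|^2\bigl|\widehat{\phi}(\xi,\eta)\bigr|^2\,d\xi\,d\eta.
\end{equation*}
The factor $e^{-ix\tau/\eta'}$ has modulus $1$, so the right-hand side is independent of $x$. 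Because $\phi(N^{-1}\eta)$ localizes to $|\eta|\sim N$, we may replace $|\eta|^{-1}$ by $N^{-1}$ and drop the cutoffs to get the bound $N^{-1}\|\phi\|_{L^2_{x,y}}^2$. Taking the supremum over $x$ and a square root yields the first inequality.

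The second inequality is proved in exactly the same way, parametrizing the dispersion surface by $\eta$ instead of $\xi$ so that the Jacobian has size $|\xi|\sim M$, giving the factor $M^{-1}$. I do not anticipate any real obstacle here: the only thing to watch is that, because $\omega(\xi,\eta)$ is bilinear rather than strictly convex, the change of variable degenerates precisely when $\eta=0$ (respectively $\xi=0$), but the frequency projection $Q_{M,N}$ keeps us safely away from that set, which is exactly what makes the argument go through and explains why the result is essentially equivalent to the bilinear bound of Lemma \ref{BS1}.
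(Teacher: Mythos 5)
Your argument is correct and is exactly the proof the paper has in mind: the paper only remarks that the lemma "follows from Plancherel after using a change of variable similar to \eqref{ChangeofVar}," and your computation (Fourier transform in $(y,t)$ at fixed $x$, the $\delta(\tau+\xi\eta')$ pinning $\xi=-\tau/\eta'$ with Jacobian factor $|\eta'|^{-1}\sim N^{-1}$, uniformity in $x$ from the unimodular factor, and the symmetric argument in the other variable) fills in precisely that sketch. No gaps.
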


\begin{proof}
The proof is similar to the one in the elliptic case and follows from Plancherel after using a change of variable similar to \eqref{ChangeofVar}.  An equivalent statement with proof occurs in \cite[Theorem $2.1$]{LP}.  See also \cite{Chihara} for a general statement of Local Smoothing Estimates for Dispersive Equations.
\end{proof}

\section{Mass-supercritical HNLS}
\label{super}

In this section, we observe that $\dot{H}_{h}^{s}$ has similar improved Sobolev inequalities as the $\dot{H}^{1/2}$ Sobolev norm. A typical example is the following:

\begin{lemma}
\label{Soblem}
Let $f\in C^\infty_c(\mathbf{R}^2)$. There holds that
\begin{equation}\label{Sob1}
\begin{split}
\Vert f\Vert_{L^6_{x,y}}&\lesssim \left(\sup_{M,N}(MN)^{-\frac{1}{6}}\Vert Q_{M,N}f\Vert_{L^\infty}\right)^\frac{1}{3}\Vert f\Vert_{\dot{H}_h^\frac{2}{3}}^\frac{2}{3}\lesssim \Vert f\Vert_{\dot{H}_h^\frac{2}{3}},\\
\end{split}
\end{equation}
and consequently,
\begin{equation}\label{Sob2}
\Vert f\Vert_{L^4_{x,y}}\lesssim \left(\sup_{M,N}(MN)^{-\frac{1}{4}}\Vert Q_{M,N}f\Vert_{L^\infty}\right)^\frac{1}{6}\Vert f\Vert_{\dot{H}_h^\frac{1}{2}}^\frac{5}{6}\lesssim \Vert f\Vert_{\dot{H}_h^\frac{1}{2}}.
\end{equation}

\end{lemma}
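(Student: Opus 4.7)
The plan is a Gérard--Meyer--Oru layer-cake argument. The trailing $\lesssim \|f\|_{\dot H_h^{2/3}}$ in \eqref{Sob1} (resp. $\lesssim \|f\|_{\dot H_h^{1/2}}$ in \eqref{Sob2}) is immediate from Bernstein: since $\|Q_{M,N}f\|_{L^\infty}\lesssim (MN)^{1/2}\|Q_{M,N}f\|_{L^2}$, one has
\begin{equation*}
(MN)^{-1/6}\|Q_{M,N}f\|_{L^\infty}\lesssim (MN)^{1/3}\|Q_{M,N}f\|_{L^2}\le \|f\|_{\dot H_h^{2/3}},
\end{equation*}
and similarly with $-1/4,1/4,1/2$ for the $L^4$ version.

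For the first inequality in \eqref{Sob1}, set $A:=\sup_{M,N}(MN)^{-1/6}\|Q_{M,N}f\|_{L^\infty}$; I would prove the stronger estimate $\|f\|_{L^6}^6 \lesssim A^4\|f\|_{\dot H_h^{2/3}}^2$, which via the Bernstein bound $A\lesssim \|f\|_{\dot H_h^{2/3}}$ implies the claimed $\|f\|_{L^6}^6\lesssim A^2 \|f\|_{\dot H_h^{2/3}}^4$. Using the layer cake $\|f\|_{L^6}^6 = 6\int_0^\infty t^5 |\{|f|>t\}|\,dt$, for each $t>0$ decompose $f=f_t^L+f_t^H$ by truncating in the variable $(MN)^{1/6}$ at a level $\Lambda(t)\sim t/A$, tuned so that $\|f_t^L\|_{L^\infty}\le t/2$. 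Chebyshev then gives $|\{|f|>t\}|\le 4t^{-2}\|f_t^H\|_{L^2}^2$. Rather than substituting a bound on $\|f_t^H\|_{L^2}^2$ directly (which leads to a logarithmically divergent $t$-integral), the key step is to swap the order of integration:
\begin{equation*}
\int_0^\infty t^3\|f_t^H\|_{L^2}^2\,dt = \sum_{M,N}\|Q_{M,N}f\|_{L^2}^2\int_0^{A(MN)^{1/6}}t^3\,dt \lesssim A^4\sum_{M,N}(MN)^{2/3}\|Q_{M,N}f\|_{L^2}^2 = A^4\|f\|_{\dot H_h^{2/3}}^2,
\end{equation*}
which yields the claim. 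The inequality \eqref{Sob2} follows by running the identical argument with the parameters $(p,s)=(6,2/3)$ replaced by $(p,s)=(4,1/2)$, so that the exponent on $A$ becomes $1/6$ and on $\|f\|_{\dot H_h^{1/2}}$ becomes $5/6$; alternatively, one can derive it from \eqref{Sob1} by Hölder-interpolating with the basic Sobolev embedding of Lemma \ref{Sob}.

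The main obstacle is that the anisotropic Littlewood--Paley index $(j,k)\in\mathbb{Z}^2$ is two-sided, so the naive bound
\begin{equation*}
\|f_t^L\|_{L^\infty}\le A\!\!\sum_{j+k\le 6\log_2\Lambda(t)}\!\!2^{(j+k)/6}
\end{equation*}
is formally divergent---infinitely many $(j,k)$ share any fixed value of $j+k$. I would resolve this either by performing the cutoff with two separate parameters in the $|\xi|$ and $|\eta|$ directions (so each individual sum is one-sided and geometric, at the cost of an anisotropy-dependent log that is absorbed by the swapped-order trick), or by truncating $\hat f$ to a large box first, running the argument with uniform constants on the resulting finitely-supported-in-frequency object, and then passing to the limit by monotone convergence using $f\in C^\infty_c$.
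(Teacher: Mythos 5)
Your layer-cake (G\'erard--Meyer--Oru) route is genuinely different from the paper's proof, which instead expands $\Vert f\Vert_{L^6}^6$ as a six-fold sum over pieces $Q_{M_1,N_1}f\cdots Q_{M_6,N_6}f$ and extracts two of the six factors in the weighted $L^\infty$ norm. Unfortunately your argument has a genuine gap at exactly the point you flag, and neither proposed repair closes it. The obstruction is that the anisotropic Littlewood--Paley family is indexed by the two-parameter lattice $(M,N)\in 2^{\mathbb{Z}}\times 2^{\mathbb{Z}}$ while the Chebyshev threshold $t$ is a single scalar. With the product cutoff $MN\le \Lambda(t)^6$ the bound $\Vert f_t^L\Vert_{L^\infty}\le A\sum_{MN\le\Lambda^6}(MN)^{1/6}$ is infinite, as you note. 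With two separate cutoffs $M\le\Lambda_1(t)$, $N\le\Lambda_2(t)$ the low-frequency sum does converge, but the high part becomes $\{M>\Lambda_1\text{ or }N>\Lambda_2\}$, so a given piece $(M,N)$ stays in the high part for all $t<T(M,N)$ where $T(M,N)$ is governed by $\max$ of the two one-dimensional conditions rather than by $A(MN)^{1/6}$. For instance with the symmetric choice $\Lambda_1=\Lambda_2=(t/A)^3$ the swapped-order integral produces $A^4\sum_{M,N}\max(M,N)^{4/3}\Vert Q_{M,N}f\Vert_{L^2}^2$, and $\max(M,N)^{4/3}=(MN)^{2/3}\bigl(\max(M,N)/\min(M,N)\bigr)^{2/3}$, so the loss is a \emph{power of the eccentricity} $M/N$, not an ``anisotropy-dependent log,'' and it is not summable against the $\dot H_h^{2/3}$ weights; no choice of the two thresholds as functions of the single variable $t$ (nor any fixed power split $\Lambda_1=(t/A)^{6\alpha}$, $\Lambda_2=(t/A)^{6(1-\alpha)}$) removes it. The box-truncation repair also fails: restricting to $2^{-K}\le M,N\le 2^K$ makes the low-frequency sum finite but of size $\sim K\Lambda$, since $O(K)$ dyadic pairs share each value of the product $MN$; the resulting constant is $K^4$, which diverges as the truncation is removed, so monotone convergence yields nothing.

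A second, related issue: you set out to prove the strictly stronger bound $\Vert f\Vert_{L^6}^6\lesssim A^4\Vert f\Vert_{\dot H_h^{2/3}}^2$, whereas \eqref{Sob1} only claims $\Vert f\Vert_{L^6}^6\lesssim A^2\Vert f\Vert_{\dot H_h^{2/3}}^4$. The exponent $2$ on $A$ is not an artifact: among six indices, the set of $i$ for which $M_i$ exceeds the second-largest of the $M$'s and the set for which $N_i$ exceeds the second-largest of the $N$'s together cover at most four indices, so only two factors can be guaranteed to be simultaneously non-extremal in both directions and hence placed in the weighted $L^\infty$ controlled by $A$; the remaining four are handled by \eqref{Bern} and Cauchy--Schwarz in each direction separately, yielding $\Vert f\Vert_{\dot H_h^{2/3}}^4$. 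This is the same two-parameter obstruction that breaks your layer cake, and it is why the paper states the weaker exponent. I would recommend proving the inequality as stated via the multilinear expansion (the weaker $A^2$ version does not follow from a layer-cake argument either, since its homogeneity in $t$ is wrong); if you want to keep the $A^4$ version you must either give an argument that genuinely handles rectangles of extreme aspect ratio or exhibit a counterexample showing it is false.
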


This is essentially a consequence of the following simple inequalities
\begin{align}\label{Bern}
\begin{split}
\Vert Q_{M,N}f\Vert_{L^\infty_{x,y}}&\lesssim N^\frac{1}{2}\Vert Q_{M,N}f\Vert_{L^\infty_xL^2_y}\lesssim N^\frac{1}{2}\Vert Q_{M,N}f\Vert_{L^2_yL^\infty_x} \\
&  \lesssim (MN)^\frac{1}{2}\Vert Q_{M,N}f\Vert_{L^2_{x,y}},
\end{split}
\end{align}
and similarly after exchanging the role of $x$ and $y$.

\noindent \emph{Proof of Lemma \ref{Soblem}:}

Indeed, we may simply develop
\begin{equation*}
\begin{split}
\Vert f\Vert_{L^6_{x,y}}^6&\lesssim \sum_{\substack{M_1,\dots, M_6,\\ N_1,\dots, N_6}}\iint_{\mathbf{R}\times\mathbf{R}}Q_{M_1,N_1}f\cdot Q_{M_2,N_2}f\dots Q_{M_6,N_6}f dxdy
\end{split}
\end{equation*}
without loss of generality, we may assume that
\begin{equation*}
\begin{split}
M_5,M_6\lesssim\mu_2=\max_2\{M_1,M_2,M_3,M_4\},\\
N_5,N_6\lesssim\nu_2=\max_2\{N_1,N_2,N_3,N_4\},\\
\end{split}
\end{equation*}
where $\max_2(S)$ denotes the second largest element of the set $S$, and then using H\"older's inequality and summing over $M_5,M_6$ and $N_5,N_6$, we obtain
\begin{equation*}
\begin{split}
& \Vert f\Vert_{L^6_{x,y}}^6 \lesssim \left(\sup_{M,N}(MN)^{-\frac{1}{6}}\Vert Q_{M,N}f\Vert_{L^\infty}\right)^2\times\\
&\qquad \sum_{\substack{M_1,\dots, M_4,\,\, M_5,M_6\le\mu_2\\ N_1,\dots, N_4,\,\, N_5,N_6\le \nu_2}}(M_5M_6N_5N_6)^\frac{1}{6}\iint_{\mathbf{R}\times\mathbf{R}}\vert Q_{M_1,N_1}f\vert\dots\vert Q_{M_4,N_4}f\vert dxdy\\
&\lesssim \left(\sup_{M,N}(MN)^{-\frac{1}{6}}\Vert Q_{M,N}f\Vert_{L^\infty}\right)^2   \times \\
& \hspace{2cm} \sum_{\substack{M_1,\dots, M_4\\ N_1,\dots, N_4}}(\mu_2\nu_2)^\frac{1}{3} \iint_{\mathbf{R}\times\mathbf{R}}\vert Q_{M_1,N_1}f\vert\dots\vert Q_{M_4,N_4}f\vert dxdy.
\end{split}
\end{equation*}
Now, using \eqref{Bern} and estimating the norms corresponding to the two lower frequencies in each direction in $L^\infty$, and the two highest ones in $L^2$, one quickly finds that
\begin{equation*}
\begin{split}
\sum_{\substack{M_1,\dots, M_4\\ N_1,\dots, N_4}}(\mu_2\nu_2)^\frac{1}{3}\iint_{\mathbf{R}\times\mathbf{R}}\vert Q_{M_1,N_1}f\vert\dots\vert Q_{M_4,N_4}f\vert \, dxdy\lesssim \Vert f\Vert_{\dot{H}^\frac{2}{3}_h}^4,
\end{split}
\end{equation*}
which finishes the proof. Inequality \eqref{Sob2} then follows by interpolation.

\noindent $\Box$

\medskip

At this point, the usual profile decomposition follows easily from the following simple Lemma \ref{InverseStric1} below.
\begin{lemma}\label{InverseStric1}
There exists $\delta>0$ such that
\begin{equation*}
\Vert e^{it\partial_x \partial_y}f\Vert_{L^8_{x,y,t}}\lesssim \left(\sup_{M,N,t,x,y}(MN)^{-\frac{1}{4}}\vert \left(e^{it\partial_x \partial_y}Q_{M,N}f\right)(x,y)\vert\right)^\delta\Vert f\Vert_{\dot{H}^\frac{1}{2}_h}^{1-\delta}.
\end{equation*}
\end{lemma}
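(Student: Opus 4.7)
The plan is to mimic the proof of Lemma \ref{Soblem}, applied to the space-time function $u(x,y,t):=e^{it\partial_x\partial_y}f$ on $\mathbb{R}^3$, using the Strichartz estimate \eqref{LinStric} in place of the Bernstein bound \eqref{Bern}. Since $Q_{M,N}$ commutes with the linear flow, I would expand
\begin{equation*}
\Vert u\Vert_{L^8_{x,y,t}}^{8}=\iiint|u|^{8}\,dxdydt=\sum_{M_i,N_i,\,1\le i\le 8}\iiint\prod_{i=1}^{8}Q_{M_i,N_i}u\,dxdydt,
\end{equation*}
with implicit complex conjugates on half of the factors coming from $|u|^8=u^4\overline{u}^4$. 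Fourier-support orthogonality in the spatial variables forces the top two $M$-scales and, independently, the top two $N$-scales among the eight indices to be comparable. Since the union of ``top two in $M$'' and ``top two in $N$'' contains at most four indices, after relabeling I may assume that $M_i\le\mu_2:=\max_2\{M_1,\dots,M_4\}$ and $N_i\le\nu_2:=\max_2\{N_1,\dots,N_4\}$ for $i=5,6,7,8$.

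I would then apply H\"older placing the top four factors in $L^4_{x,y,t}$ and the low four factors in $L^\infty_{x,y,t}$ (noting $4\cdot\frac{1}{4}+4\cdot 0=1$). The Strichartz estimate \eqref{LinStric} bounds each $L^4$ factor by $\Vert Q_{M_i,N_i}f\Vert_{L^2_{x,y}}\simeq(M_iN_i)^{-1/4}\Vert Q_{M_i,N_i}f\Vert_{\dot{H}^{1/2}_h}$, while the definition of
\begin{equation*}
A:=\sup_{M,N,t,x,y}(MN)^{-1/4}\bigl|\bigl(e^{it\partial_x\partial_y}Q_{M,N}f\bigr)(x,y)\bigr|
\end{equation*}
bounds each $L^\infty$ factor by $A\,(M_iN_i)^{1/4}$. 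Summing the four low indices over the dyadic ranges $M_i\le\mu_2,\,N_i\le\nu_2$ geometrically yields a factor $\mu_2\nu_2$. The remaining 4-linear sum over the top indices should then close by Cauchy--Schwarz combined with the $\ell^2$-orthogonality $\sum_{M,N}\Vert Q_{M,N}f\Vert^{2}_{\dot{H}^{1/2}_h}=\Vert f\Vert^{2}_{\dot{H}^{1/2}_h}$, ultimately producing a bound of the form
\begin{equation*}
\Vert u\Vert_{L^8_{x,y,t}}^{8}\lesssim A^{\gamma}\,\Vert f\Vert^{8-\gamma}_{\dot{H}^{1/2}_h}
\end{equation*}
for some $\gamma>0$; I expect $\gamma=4$, giving $\delta=\tfrac{1}{2}$.

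The principal technical hurdle is the anisotropic combinatorial bookkeeping in the final 4-linear sum: the permutations that sort $M_1,\dots,M_4$ and $N_1,\dots,N_4$ in decreasing order need not coincide, so the decay factor $\mu_2\nu_2\prod_{i=1}^{4}(M_iN_i)^{-1/4}$ must be controlled against two independent orderings. This is precisely the complication addressed in Lemma \ref{Soblem}, and I expect the same strategy---estimating ordered dyadic tuples with geometric-decay ratios in each direction separately, and then applying Cauchy--Schwarz against the $\dot{H}^{1/2}_h$-square function of $f$---to close the estimate here.
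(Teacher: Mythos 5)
There is a genuine gap: the final quadrilinear summation in your scheme does not close. After placing the four low-frequency factors in $L^\infty_{x,y,t}$ (cost $A\,(M_iN_i)^{1/4}$ each) and summing them over $M_i\le\mu_2$, $N_i\le\nu_2$, you are left with the weight $\mu_2\nu_2\prod_{i=1}^4(M_iN_i)^{-1/4}$ multiplying $\prod_{i=1}^4\Vert Q_{M_i,N_i}f\Vert_{\dot H^{1/2}_h}$, and this is \emph{not} controlled by $\Vert f\Vert_{\dot H^{1/2}_h}^4$. Concretely, take $\Vert Q_{M,N}f\Vert_{\dot H^{1/2}_h}=1$ for $M=N=2^k$, $1\le k\le K$, and $0$ otherwise, so that $\Vert f\Vert_{\dot H^{1/2}_h}^4\simeq K^2$. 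The terms with $M_1=M_2=N_1=N_2=2^k$ (consistent with the top-two orthogonality constraint) and $M_3=N_3=2^j$, $M_4=N_4=2^l$, $j,l\le k$, carry weight $2^{2k}\cdot 2^{-k}\cdot 2^{-j/2}\cdot 2^{-l/2}$, and summing over $j,l\le k\le K$ produces $\simeq 2^K$. The analogy with Lemma \ref{Soblem} is misleading on exponents: there only two of six factors absorb the sup quantity (cost $(\mu_2\nu_2)^{1/3}$), the two \emph{highest} remaining frequencies go to $L^2$ with decay $(MN)^{-1/3}$ each, and the two \emph{lowest} go to $L^\infty$ via Bernstein, so the net dyadic powers are summable. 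In your $L^8$ scheme the four $L^4_{x,y,t}$ Strichartz factors only supply $(MN)^{-1/4}$ each against a full power $(\mu_2\nu_2)^{1}$ from the low factors, and the balance fails. (Relatedly, the claimed gain $\delta=\tfrac12$ is far stronger than what the method could deliver even if it closed.)

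The paper's proof is much shorter and avoids any multilinear expansion. One interpolates by H\"older in time,
\begin{equation*}
\Vert u\Vert_{L^8_{x,y,t}}\lesssim \Vert u\Vert_{L^6_tL^{12}_{x,y}}^{3/4}\,\Vert u\Vert_{L^\infty_tL^4_{x,y}}^{1/4},
\end{equation*}
bounds the first factor by $\Vert \vert\partial_x\vert^{1/4}\vert\partial_y\vert^{1/4}u\Vert_{L^6_tL^3_{x,y}}\lesssim\Vert f\Vert_{\dot H^{1/2}_h}$ using the anisotropic Sobolev embedding of Lemma \ref{Sob} together with the $(6,3)$ Strichartz estimate, and bounds the second factor at each fixed time by the refined Sobolev inequality \eqref{Sob2}, which inserts $\bigl(\sup_{M,N,t}(MN)^{-1/4}\Vert Q_{M,N}u(t)\Vert_{L^\infty}\bigr)^{1/6}\Vert f\Vert_{\dot H^{1/2}_h}^{5/6}$ (the flow preserves $\dot H^{1/2}_h$). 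This yields the lemma with $\delta=\tfrac{1}{24}$. If you want to keep a dyadic-decomposition flavor, the right place for it is inside the fixed-time estimate \eqref{Sob2}, not in the space-time $L^8$ norm.
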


\begin{proof}[Proof of Lemma \ref{InverseStric1}]

We use H\"older's inequality, Sobolev embedding Lemma \ref{Sob}, Strichartz estimates and \eqref{Sob2} to get for $u=e^{it\partial_x\partial_y}f$
\begin{equation*}
\begin{split}
\Vert u\Vert_{L^8_{x,y,t}}&\lesssim \Vert u\Vert_{L^6_tL^{12}_{x,y}}^\frac{3}{4}\Vert u\Vert_{L^\infty_tL^4_{x,y}}^\frac{1}{4}\\
&\lesssim \Vert \vert\partial_x\vert^\frac{1}{4}\vert\partial_y\vert^\frac{1}{4}u\Vert_{L^6_tL^3_{x,y}}^\frac{3}{4}\cdot \left(\sup_{M,N,t}(MN)^{-\frac{1}{4}}\Vert Q_{M,N}u(t)\Vert_{L^\infty_{x,y}}\right)^\frac{1}{24}\Vert f\Vert_{\dot{H}^\frac{1}{2}_h}^\frac{5}{24}\\
&\lesssim \Vert f\Vert_{\dot{H}^\frac{1}{2}_h}^\frac{23}{24}\cdot \left(\sup_{M,N,t}(MN)^{-\frac{1}{4}}\Vert Q_{M,N}u(t)\Vert_{L^\infty_{x,y}}\right)^\frac{1}{24}.
\end{split}
\end{equation*}

\end{proof}

\subsection{The mass-supercritical profile decomposition}

Let us take the group action  on functions given by $g_{n}^{j} = g(x_{n}^{j}, y_{n}^{j}, \lambda_{1, n}^{j}, \lambda_{2, n}^{j})$ such that
\begin{align*}
& (g_{n}^{j})^{-1} f = (\lambda_{n, 1}^{j} \lambda_{n, 2}^{j})^{\frac14}  [f](\lambda_{n, 1}^{j} x + x_{n}^{j}, \lambda_{n, 2}^{j} y + y_{n}^{j}).
\end{align*}

\noindent We can now state the $\dot{H}^\frac{1}{2}_h$-profile decomposition for $(\ref{1.6})$.

\begin{proposition}
\label{supcritPD}
Let $\| u_{n} \|_{\dot{H}^\frac{1}{2}_h } \leq A$ be a sequence that is bounded $\dot{H}^\frac{1}{2}_h$. Then possibly after passing to a subsequence, for any $1 \leq j < \infty$ there exist $\phi^{j} \in \dot{H}^\frac{1}{2}_h$, $(t_{n}^{j}, x_{n}^{j}, y_{n}^{j}) \in \mathbf{R}^{3}$, $\lambda_{n, 1}^{j}$, $\lambda_{n, 2}^{j} \in (0, \infty)$ such that for any $J$,

\begin{equation}
u_{n} = \sum_{j = 1}^{J} g_{n}^{j} e^{i t_{n}^{j} \partial_{x} \partial_y} \phi^{j} + w_{n}^{J},
\end{equation}

\begin{equation}\label{strichartz_msc}
\lim_{J \rightarrow \infty} \limsup_{n \rightarrow \infty} \| e^{it \partial_x \partial_y  } w_{n}^{J} \|_{L_{x,y,t}^{8}} = 0,
\end{equation}

\noindent such that for any $1 \leq j \leq J$,

\begin{equation}
e^{-i t_{n}^{j} \partial_x \partial_y} (g_{n}^{j})^{-1} w_{n}^{J} \rightharpoonup 0,
\end{equation}

\noindent weakly in $\dot{H}^\frac{1}{2}_h$, 

\begin{equation}\label{decouple_msc}
\lim_{n \rightarrow \infty} \left(\| u_{n} \|_{\dot{H}^\frac{1}{2}_h}^{2} - \sum_{j = 1}^{J} \| \phi^{j} \|_{\dot{H}^\frac{1}{2}_h}^{2} - \| w_{n}^{J} \|_{\dot{H}^\frac{1}{2}_h}^{2}\right) = 0,
\end{equation}

\noindent and for any $j \neq k$,

\begin{align*}
& \limsup_{n\to\infty}\left[  \left\vert\ln \left( \frac{\lambda_{n, 1}^{j}}{\lambda_{n, 1}^{k}} \right)\right\vert + \left\vert \ln \left( \frac{\lambda_{n, 2}^{j}}{\lambda_{n, 2}^{k}}  \right)\right\vert   + \frac{|x_{n}^{j} - x_{n}^{k} |}{(\lambda_{n, 1}^{j} \lambda_{n, 1}^{k})^{1/2}}   + \frac{|y_{n}^{j} - y_{n}^{k} |}{(\lambda_{n, 2}^{j} \lambda_{n, 2}^{k})^{1/2}} \right.  \\
& \hspace{4cm} \left. + \frac{|t_{n}^{j} (\lambda_{n,1}^{j} \lambda_{n,2}^{j}) - t_{n}^{k} (\lambda_{n, 1}^{k} \lambda_{n, 2}^{k})|}{(\lambda_{n, 1}^{j} \lambda_{n, 2}^{j} \lambda_{n, 1}^{k} \lambda_{n, 2}^{k})^{1/2}}    \right]=\infty.
\end{align*}

\end{proposition}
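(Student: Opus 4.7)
The proof follows the standard Bahouri--G\'erard / Keraani iteration scheme, with Lemma \ref{InverseStric1} playing the role of the inverse Strichartz inequality. The plan is to extract profiles one at a time, each time reducing the $\dot{H}^{1/2}_h$-mass, and to terminate when the linear Strichartz norm of the remainder is negligible.

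Set $w_n^0 := u_n$. At stage $J\ge 1$, let $\varepsilon_J := \limsup_{n\to\infty}\|e^{it\partial_x\partial_y}w_n^{J-1}\|_{L^8_{x,y,t}}$. If $\varepsilon_J = 0$ we stop. Otherwise Lemma \ref{InverseStric1} yields, after passing to a subsequence, frequencies $M_n^J, N_n^J>0$ and a point $(x_n^J,y_n^J,t_n^J)\in\reals^3$ with
\begin{equation*}
(M_n^J N_n^J)^{-1/4}\bigl|(e^{it_n^J\partial_x\partial_y}Q_{M_n^J,N_n^J}w_n^{J-1})(x_n^J,y_n^J)\bigr|\gtrsim \varepsilon_J^{1/\delta}A^{-(1-\delta)/\delta}.
\end{equation*}
I would set $\lambda_{n,1}^J := (M_n^J)^{-1}$, $\lambda_{n,2}^J := (N_n^J)^{-1}$, and let $g_n^J$ be the corresponding symmetry element as in the statement. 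Testing the displayed lower bound against a rescaled bump, together with Bernstein's estimate \eqref{Bern}, shows that the sequence $e^{-it_n^J\partial_x\partial_y}(g_n^J)^{-1}w_n^{J-1}$ is bounded in $\dot{H}^{1/2}_h$ and its weak limit (extracting again) is a nonzero function $\phi^J\in\dot{H}^{1/2}_h$ of controlled norm. Define $w_n^J := w_n^{J-1} - g_n^J e^{it_n^J\partial_x\partial_y}\phi^J$; the weak convergence property in the proposition then holds at step $J$ by construction, and the standard Hilbert-space identity gives the one-step Pythagorean decoupling $\|w_n^{J-1}\|_{\dot H^{1/2}_h}^2 = \|\phi^J\|_{\dot H^{1/2}_h}^2 + \|w_n^J\|_{\dot H^{1/2}_h}^2 + o(1)$.

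To close the iteration, I would prove the asymptotic orthogonality of parameters by contradiction. If for some pair $j<k$ the bracketed quantity in the proposition stays bounded along a subsequence, then, passing to a further subsequence, the composition $(g_n^k)^{-1}g_n^j e^{i(t_n^j-t_n^k)\partial_x\partial_y}$ converges to a nontrivial symmetry $T$; testing $e^{-it_n^k\partial_x\partial_y}(g_n^k)^{-1}w_n^{k-1}$ against a Schwartz function localized near $T\phi^j$ then produces a nonzero weak-limit contribution, contradicting the fact that $w_n^{k-1}$ already has the $j$-th profile subtracted out (equivalently, that the $k$-th weak-limit equals $\phi^k$). This parameter orthogonality then upgrades the one-step Pythagorean identity to the full decoupling \eqref{decouple_msc} by induction on $J$: the cross terms between distinct profiles vanish by a change of variable that sends them to inner products with the weakly-vanishing remainder or to oscillatory pairings of mismatched bump functions. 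Finally, summing \eqref{decouple_msc} gives $\sum_j\|\phi^j\|_{\dot H^{1/2}_h}^2\le A^2$, so $\|\phi^J\|_{\dot H^{1/2}_h}\to 0$; feeding this back into Lemma \ref{InverseStric1} forces $\varepsilon_J\to 0$, which is \eqref{strichartz_msc}.

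The main obstacle is the asymptotic orthogonality step: unlike in the isotropic Schr\"odinger case, the symmetry group here is genuinely four-parameter with two independent scaling directions and a time-shift that couples them through the factor $\lambda_{n,1}^j\lambda_{n,2}^j$, so one must carefully track how the anisotropic compositions $(g_n^k)^{-1}g_n^j$ behave as $n\to\infty$ and identify exactly which combination of the five quantities in the bracket encodes the nontrivial part of the limiting symmetry $T$. Once this bookkeeping is done, the contradiction argument proceeds as in the elliptic case, and the remainder of the proof is the standard telescoping.
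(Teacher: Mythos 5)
Your proposal is correct and follows essentially the same route as the paper, which proves this proposition by invoking the standard Keraani/Killip--Visan iteration scheme with Lemma \ref{InverseStric1} serving as the inverse Strichartz inequality, exactly as you describe. The points you flag as delicate (extracting a nontrivial weak limit by pairing against the rescaled bump $(MN)^{-1/4}Q_{M,N}\delta$, and tracking the two independent scaling parameters in the orthogonality argument) are precisely the adaptations the paper has in mind when it says the argument is a ``simple adaptation'' of the elliptic case.
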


The proof of Proposition \ref{supcritPD} of this follows by simple adaptation of the techniques in \cite[Section 4.4]{KV}, as originally introduced in \cite{Ker1}. We note that a similar statement also appears in works of Fanelli-Visciglia \cite{FV}.

\section{Profile decomposition for the mass-critical HLS}
\label{crit}

In this section, we focus on the mass-critical case. This case is more delicate for two reasons. First we need to account for the Galilean invariance symmetry in \eqref{1.4} and second, we cannot use a simple Sobolev estimate as in \eqref{Sob2} to fix the frequency scales. We follow closely the work in \cite[Section $4$]{KV} with a small variant in the use of modulation orthogonality and an additional argument for interactions of rectangles with skewed aspect ratios.

\subsection{A precised Strichartz inequality}

The main result in this section is the following proposition from which it is not hard to obtain a good profile decomposition. We need to introduce the norm
\begin{equation}\label{DefXp}
\Vert \phi\Vert_{X_p}:=\left(\sum_{R\in\mathcal{R}}\vert R\vert^{-\frac{p}{20}}\Vert \phi\mathfrak{1}_{R}\Vert_{L^\frac{20}{11}}^{p}\right)^\frac{1}{p}
\end{equation}
where $\mathcal{R}$ stands for the collection of all dyadic rectangles.    That is, rectangles with both sides parallel to an axis, of possibly different dyadic size, whose center is a multiple of the same dyadic numbers, given by the form
\begin{equation}
\label{Rdesc}
\begin{split}
\mathcal{R} &:= \{R_{k,n,p,m}\,:\,k, n, m, p \in \mathbb{Z}\},\\
R_{k,n,p,m} &:= \{  (x,y) :\,\, n-1 \leq 2^{-k}x \leq n+1,\,\,  m-1 \leq 2^{-p}y \leq m+1  \}.
\end{split}
\end{equation}
Note in particular that these spaces are nested: $X_p\subset X_q$ whenever $p\le q$.  The motivation for the space $L^\frac{20}{11}$ in \eqref{DefXp} can be motivated by the $X^q_p$ Strichartz estimate in Theorem $4.23$ from \cite{KV}.

\begin{proposition}\label{PD1}
Let $\phi\in C^\infty_c(\mathbf{R}^2)$, then, there holds that for all $p>2$,
\begin{equation}\label{NormIneq}
\Vert \phi\Vert_{X_p}\lesssim_p \Vert \phi\Vert_{L^2}
\end{equation}
and in addition, there exists $p>2$ such that
\begin{equation}\label{NormIneq2}
\Vert e^{it\partial_x \partial_y}\phi\Vert_{L^4_{x,y,t}}^4\lesssim \left(\sup_{R}\vert R\vert^{-\frac{1}{2}}\sup_{x,y,t}\vert e^{it\partial_x \partial_y}(P_R\phi)(x,y)\vert \right)^{\frac{4}{21}}\Vert \widehat{\phi}\Vert_{X_p}^{\frac{80}{21}}.
\end{equation}

\end{proposition}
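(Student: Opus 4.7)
The plan is to prove the two inequalities separately. Inequality \eqref{NormIneq} is a structural consequence of H\"older's inequality on each rectangle combined with the finite-overlap tiling at each aspect ratio, while \eqref{NormIneq2} is the main content and follows the standard refined-Strichartz blueprint, but with a double Whitney decomposition across aspect ratios to accommodate the failure of two-direction bilinear Strichartz for skewed pairs.

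For \eqref{NormIneq}, the starting point is the H\"older bound $\Vert\phi\mathfrak{1}_R\Vert_{L^{20/11}}\le |R|^{1/20}\Vert\phi\mathfrak{1}_R\Vert_{L^2}$, which exactly cancels the weight $|R|^{-p/20}$ in the definition of $X_p$. For each fixed pair of dyadic scales $(2^k, 2^q)$ the rectangles $\{R_{k,n,q,m}\}_{n,m}$ form a finitely overlapping cover of $\mathbb{R}^2$, giving $\sum_{n,m}\Vert\phi\mathfrak{1}_R\Vert_{L^2}^2\lesssim\Vert\phi\Vert_{L^2}^2$, and the inclusion $\ell^2\hookrightarrow\ell^p$ (valid for $p>2$) upgrades this to the required $p$-th-power bound at each fixed scale. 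To sum over aspect ratios I would perform a Littlewood--Paley decomposition $\phi=\sum_{M,N}Q_{M,N}\phi$, observing that each piece has a natural physical scale $(M^{-1}, N^{-1})$; Bernstein's inequality controls the contribution from rectangles smaller than the natural scale while the trivial estimate $\Vert Q_{M,N}\phi\mathfrak{1}_R\Vert_{L^{20/11}}\le\Vert Q_{M,N}\phi\Vert_{L^{20/11}}$ suffices for rectangles much larger, yielding geometric decay in the scale mismatch that sums against almost-orthogonality of the LP pieces.

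For \eqref{NormIneq2}, I would start from $\Vert u\Vert_{L^4_{x,y,t}}^4=\Vert u^2\Vert_{L^2_{x,y,t}}^2$ with $u=e^{it\partial_x\partial_y}\phi$ and decompose $\phi=\sum_R P_R\phi$ via a Whitney decomposition in Fourier space. Writing $S:=\sup_R|R|^{-1/2}\sup_{t,x,y}|u_R|$ and noting that only Fourier-compatible pairs contribute, one obtains $\Vert u^2\Vert_{L^2}^2\lesssim\sum_{(R_1,R_2)\in\mathcal{W}}\Vert u_{R_1}u_{R_2}\Vert_{L^2}^2$ where $\mathcal{W}$ is the collection of comparable-scale, separated-center Whitney pairs. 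On each such pair I would interpolate by H\"older between the trivial $L^\infty$ bound (which introduces the factor $S$ via $\Vert u_R\Vert_{L^\infty}\le |R|^{1/2}S$) and the bilinear $L^{40/21}$ bound from Lemma \ref{LemBSE3}: $\Vert u_{R_1}u_{R_2}\Vert_{L^2}\le \Vert u_{R_1}u_{R_2}\Vert_{L^\infty}^{\theta}\Vert u_{R_1}u_{R_2}\Vert_{L^{40/21}}^{1-\theta}$. The exponent $\theta$ is chosen so that, after summing in $R_1,R_2$, the $S$-factor appears with power $4/21$ and the $\Vert\widehat{\phi}\mathfrak{1}_R\Vert_{L^{20/11}}$ factors appear with weight $|R|^{-p/20}$ for some $p>2$; a Cauchy--Schwarz in the diagonal Whitney sum then assembles these into $\Vert\widehat{\phi}\Vert_{X_p}^{80/21}$.

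The principal obstacle is the treatment of \emph{skewed} pairs---pairs $(R_1, R_2)$ with very different aspect ratios---which fall outside the scope of Lemma \ref{BS2}. For these one exploits that the bilinear product $u_{R_1}u_{R_2}$ has Fourier support essentially confined to the intersection of the two differently-shaped rectangles, and that this intersection is much smaller than either of the $R_i$, producing an extra smallness factor that compensates for the missing two-direction separation. The delicate technical step is to organize all such skewed contributions through a double Whitney decomposition so that each produces the same weight $|R|^{-p/20}$ that appears in the $X_p$ norm; this bookkeeping, which is what motivates the precise form of the $X_p$ weight, is the technical heart of the proof.
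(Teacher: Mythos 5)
Your outline of \eqref{NormIneq2} reproduces the elliptic blueprint, but it glosses over exactly the step that fails here and that the proof must repair. The reduction you assert, $\Vert u^2\Vert_{L^2}^2\lesssim\sum_{(R_1,R_2)\in\mathcal W}\Vert u_{R_1}u_{R_2}\Vert_{L^2}^2$, is false as stated: Plancherel in $(\xi,\eta)$ and the time-orthogonality coming from the phase $(\xi_1-\xi_2)(\eta_1-\eta_2)\simeq\Omega$ only separate Whitney quadruples with different \emph{areas} and incompatible centers, and there are infinitely many rectangles of a given area and center with different aspect ratios. One must therefore expand the square at fixed $\Omega$ and genuinely estimate the off-diagonal quadruples $\int u_{R_1}u_{R_2}\overline{u_{R_1'}u_{R_2'}}$ with $R_1\sim R_2$, $R_1'\sim R_2'$ of discrepancy $\delta=\delta(R_1,R_1')\ll1$; note that within a single Whitney pair $R_1$ and $R_2$ have identical dimensions, so the skewness lives between the two pairs, not inside one. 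Your proposed mechanism for these terms (smallness of the Fourier-support intersection of $u_{R_1}u_{R_2}$) is not what closes the argument. What works is: the strengthened two-directional separation \eqref{NewOrtho} forced by the frequency constraint, Cauchy--Schwarz to reduce the quadruple to the cross-pairs $\Vert u_{R_1}u_{R_1'}\Vert_{L^2}\Vert u_{R_2}u_{R_2'}\Vert_{L^2}$, the subdivision \eqref{FinerDec} of each rectangle into $\delta\times1$ pieces so that the cross-pairs satisfy \eqref{NewOrtho2} and Lemma \ref{LemBSE3} applies with the gain $(\delta^{-1}\Omega)^{-6/21}$, and finally a counting bound ($O(\delta^{-1})$ admissible $c_1'$ per $c_1$) interpolated against the choice of $\theta(p)$ so that $\sum_\delta\delta^{\frac{8}{21}(1-\frac{10\theta}{p})}$ converges for $2<p<40/17$. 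None of this bookkeeping is derivable from the sketch you give, and it is the content of the proposition.

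For \eqref{NormIneq} your route also has a gap. The fixed-scale step (H\"older plus $\ell^2\subset\ell^p$) is fine but yields $\Vert\phi\Vert_{L^2}^p$ at \emph{every} pair of dyadic scales, and the Littlewood--Paley/Bernstein repair you propose only gains in the area $|R|\,MN$, not separately in $2^kM$ and $2^qN$; since there are infinitely many aspect ratios at each fixed area, the sum over scales still diverges. Moreover, summing the LP pieces back up inside $X_p$ requires an almost-orthogonality in a norm built from local $L^{20/11}$ pieces (a sub-$L^2$ exponent), which is not available off the shelf. The paper avoids Fourier analysis entirely: it tensorizes, reducing first to the mixed-norm claim \eqref{TnterClaim1} in $y$ for each fixed $I$, then to the one-dimensional model Lemma \ref{ModelLem}, which is proved by a layer-cake splitting $g=g^++g^-$ at height $A^{-1/2}$ for each dyadic length $A$ and summation of the resulting geometric series. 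You would need either to adopt that real-variable argument or to supply a genuinely two-parameter (directional) gain plus an $X_p$-orthogonality statement for your LP decomposition.
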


We refer to \cite{RV} for a different proof of a slightly stronger estimate. Let us first recall the Whitney decomposition.

\begin{lemma}[Whitney decomposition]

There exists a tiling of the plane minus the diagonal
\begin{equation*}
\begin{split}
\mathbf{R}^2\setminus D&=\Cup I\times J,\qquad  D =\{(x,x),\quad x\in\mathbf{R}\},
\end{split}
\end{equation*}
made of dyadic intervals such that $\vert I\vert=\vert J\vert$ and
\begin{equation*}
\begin{split}
6\vert I\vert\le\hbox{dist}(I\times J,D)\le 24 \vert I\vert.
\end{split}
\end{equation*}

\end{lemma}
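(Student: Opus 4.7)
\emph{Proof plan.} The plan is to produce the tiling by a standard dyadic Whitney selection on $\mathbb{R}^2\setminus D$ and then verify the required bounds. For each $z\in\mathbb{R}^2\setminus D$ and $k\in\mathbb{Z}$, let $R_k(z)=I\times J$ denote the closed axis-aligned square of sidelength $2^k$ in the standard dyadic grid $2^k\mathbb{Z}$ containing $z$ (with a fixed rule on grid lines). Since $\hbox{dist}(R_k(z),D)\ge \hbox{dist}(z,D)-\sqrt{2}\cdot 2^k$, the inequality $\hbox{dist}(R_k(z),D)\ge 6\cdot 2^k$ holds for all sufficiently negative $k$, while it forces $2^k\lesssim \hbox{dist}(z,D)$ and so fails for all $k$ large. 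Hence
\[
k(z):=\max\bigl\{k\in\mathbb{Z}\,:\,\hbox{dist}(R_k(z),D)\ge 6\cdot 2^k\bigr\}
\]
is a well-defined integer. Set $R(z):=R_{k(z)}(z)$ and let $\mathcal{W}:=\{R(z):z\in\mathbb{R}^2\setminus D\}$.

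By construction $\hbox{dist}(R(z),D)\ge 6\vert I\vert$ with $\vert I\vert=2^{k(z)}$. For the upper bound, the parent $R_{k(z)+1}(z)\supset R(z)$ fails the defining inequality, so $\hbox{dist}(R_{k(z)+1}(z),D)<6\cdot 2^{k(z)+1}=12\vert I\vert$; combining this with $\hbox{diam}(R_{k(z)+1}(z))=2\sqrt{2}\vert I\vert$ and the triangle inequality yields $\hbox{dist}(R(z),D)<(12+2\sqrt{2})\vert I\vert<24\vert I\vert$, as required.

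The main step is to check that distinct elements of $\mathcal{W}$ have pairwise disjoint interiors (covering is immediate since $z\in R(z)$). Two squares of the standard dyadic grid are either nested or have disjoint interiors, so an overlap would force $R(z_1)\subsetneq R(z_2)$ (say) and then $k(z_1)<k(z_2)$; but the parent $R_{k(z_1)+1}(z_1)$ lies inside $R(z_2)$, hence
\[
\hbox{dist}(R_{k(z_1)+1}(z_1),D)\ge \hbox{dist}(R(z_2),D)\ge 6\cdot 2^{k(z_2)}\ge 12\cdot 2^{k(z_1)},
\]
directly contradicting the maximality defining $k(z_1)$. The only mild obstacle is to pick constants so that this parent/child contradiction closes within the prescribed window $[6\vert I\vert,24\vert I\vert]$: the lower constant $6$ leaves a factor $12+2\sqrt{2}<24$ of slack on the upper end, which is precisely what is needed.
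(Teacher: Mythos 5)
Your construction is correct: the stopping-time selection $k(z)=\max\{k:\hbox{dist}(R_k(z),D)\ge 6\cdot 2^k\}$ is well defined, the maximality at the parent level gives the upper bound $\hbox{dist}(R(z),D)<(12+2\sqrt{2})\vert I\vert<24\vert I\vert$, and the nesting property of dyadic squares rules out overlaps exactly as you argue. The paper itself states this lemma without proof, simply recalling it as a standard fact, so your argument supplies the (standard) missing details; nothing in your write-up conflicts with how the decomposition is subsequently used, since the tiles are genuine products of dyadic intervals with $\vert I\vert=\vert J\vert$ and $\hbox{dist}(I\times J,D)\simeq\vert I\vert$.
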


We will consider two independent Whitney decompositions of $\mathbf{R}\times\mathbf{R}$:
\begin{equation}\label{WD}
\begin{split}
1_{\{\mathbf{R}^2\times\mathbf{R}^2\setminus D\}}(\xi_1,\eta_1,\xi_2,\eta_2):=\sum_{I_1\sim I_2,\,\, J_1\sim J_2}\mathfrak{1}_{I_1}(\xi_1)\mathfrak{1}_{J_1}(\eta_1)\mathfrak{1}_{I_2}(\xi_2)\mathfrak{1}_{J_2}(\eta_2),
\end{split}
\end{equation}
where $I_i$ and $J_j$ are dyadic intervals of $\mathbf{R}$ and $\sim$ is an equivalence relation such that, for each fixed $I$, there are only finitely many $J$'s such that $I\sim J$, uniformly in $I$ (i.e. equivalence classes have bounded cardinality) and if $I\sim J$, then $\vert I\vert=\vert J \vert$ and $\hbox{dist}(I,J)\simeq \vert I\vert$. We also extend the equivalence relation to rectangles in the following fashion:
\begin{equation*}
I\times J \sim I'\times J' \qquad \text{if and only if} \qquad  I \sim I' \ \text{and} \ J \sim J'.
\end{equation*}

\medskip

We would like to follow the argument in \cite{KV} for the profile decomposition for the elliptic nonlinear Schr{\"o}dinger equation. However, it is at this point where we reach the main technical obstruction to doing this. Recall that to estimate the $L_{x,y,t}^{2}$ norm of $[e^{it \Delta} f]^{2}$, it was possible to utilize Plancherel's theorem, reducing the $L_{x,y,t}^{2}$ norm to an $l^{2}$ sum over pairs of Whitney squares.

This was because Plancherel's theorem in frequency turned the sum over all pairs of {\it equal area} squares to an $l^{2}$ sum over squares centered at different points in frequency space, and then Plancherel's theorem in time separated out pairs of squares with different area. Because there is only one square with a given area and center in space, this is enough. However, there are infinitely many rectangles with the same area and the same center. Thus, to reduce the $L_{x,y,t}^{2}$ norm of $[e^{it \partial_x\partial_y} f]^{2}$ to a $l^{2}$ sum over pairs of rectangles, that is rectangles whose sides obey the equivalence relation in both $x$ and $y$, it is necessary to deal with the off - diagonal terms, that is terms of the form

\begin{equation}
\| [e^{it \partial_x\partial_y} P_{R_{1}} f][e^{it \partial_x\partial_y} P_{R_{2}} f][\overline{e^{it \partial_x\partial_y} P_{R_{1}'} f}] [\overline{e^{it \partial_x\partial_y} P_{R_{2}'} f}] \|_{L_{x,y,t}^{1}},
\end{equation}  

\noindent where $R_{1} \sim R_{2}$ and $R_{1}' \sim R_{2}'$ are Whitney pairs of rectangles which have the same area, but very different dimensions in $x$ and $y$. In this case, Lemma \ref{BS1} gives a clue with regard to how to proceed, since it leads to the generalized result that

\begin{equation}
\| [e^{it \partial_x\partial_y} P_{R_{1}} f] [\overline{e^{it  \partial_x\partial_y} P_{R_{1}'} f}] \|_{L_{x,y,t}^{2}} \ll \| P_{R_{1}} f \|_{L^{2}_{x,y}} \| P_{R_{1}'} f \|_{L^{2}_{x,y}}.
\end{equation}

\noindent Thus, it may be possible to sum the off diagonal terms. We will not use Lemma $\ref{BS1}$ specifically, but we will use the idea that rectangles of the same area but very different dimensions have very weak bilinear interactions. 

Before we turn to the details, we first present the main orthogonality properties we will use. For simplicity of notation, given a dyadic rectangle $R$, let
\begin{equation*}
\widehat{\phi_R}(x,y):=\widehat{\phi}(x,y)\mathfrak{1}_R(x,y)\qquad\hbox{ and }\qquad u_R(x,y,t):=\left(e^{it\partial_x\partial_y}\phi_R\right)(x,y)
\end{equation*}
and set $u=e^{it\partial_x\partial_y}\phi$. Also we will consider rectangles $R_1=I_1\times J_1$, $R_2=I_2\times J_2$, $R_1'=I_1'\times J_1'$, $R_2'=I_2' \times J_2'$.

Proceeding with the above philosophy in mind, using \eqref{WD}, we have that

\begin{equation*}
\begin{split}
\Vert u\Vert_{L^4_{x,y,t}}^4&=\Vert u^2\Vert_{L^2_{x,y,t}}^2=\Vert \sum_{R_1\sim R_2}u_{R_1}\cdot u_{R_2}\Vert_{L^2_{x,y,t}}^2\\
&=\Vert \sum_{\Omega}\sum_{\substack{R_1\sim R_2,\\
\vert R_1\vert=\vert R_2\vert=\Omega}}u_{R_1}\cdot u_{R_2}\Vert_{L^2_{x,y,t}}^2=\Vert \sum_{\Omega} I_\Omega\Vert_{L^2_{x,y,t}}^2.
\end{split}
\end{equation*}

Using the polarization identity for a quadratic form,
\begin{equation*}
Q(x_1,y_1)+Q(x_2,y_2)=\frac{1}{2}\left[Q(x_1+x_2,y_1+y_2)+Q(x_1-x_2,y_1-y_2)\right],
\end{equation*}
 we compute that
\begin{equation*}
\begin{split}
e^{i\frac{t}{2}\xi\eta}\widehat{I_\Omega}&(\xi,\eta,t)=\sum_{\substack{R_1\sim R_2,\\
\vert R_1\vert=\vert R_2\vert=\Omega}}\int_{\mathbf{R}^4}\mathfrak{1}_{R_1}(\xi_1,\eta_1)\mathfrak{1}_{R_2}(\xi_2,\eta_2)e^{-i\frac{t}{2}(\xi_1-\xi_2)(\eta_1-\eta_2)}  \\
&\times \widehat{f}(\xi_1,\eta_1)\widehat{f}(\xi_2,\eta_2)\delta(\xi-\xi_1-\xi_2)\delta(\eta-\eta_1-\eta_2)d\xi_1 d\xi_2 d\eta_1 d\eta_2.
\end{split}
\end{equation*}
Now we observe that since
\begin{equation*}
\vert I_1\vert=\vert I_2\vert\simeq\hbox{dist}(I_1,I_2),\qquad \vert J_1\vert=\vert J_2\vert\simeq\hbox{dist}(J_1,J_2),
\end{equation*}
it holds that, on the support of integration,
\begin{equation*}
\vert (\xi_1-\xi_2)(\eta_1-\eta_2)\vert\simeq \vert I_1\vert\cdot\vert J_1\vert\simeq \Omega.
\end{equation*}
Therefore, we have the following orthogonality in time
\begin{equation*}
\Vert \sum_\Omega\widehat{I_\Omega}(\xi,\eta,\cdot)\Vert_{L^2_t}^2 =\Vert e^{i\frac{t}{2}\xi\eta}\sum_\Omega\widehat{I_\Omega}(\xi,\eta,\cdot)\Vert_{L^2_t}^2 \lesssim \sum_\Omega\Vert \widehat{I_\Omega}(\xi,\eta,\cdot)\Vert_{L^2_t}^2.
\end{equation*}

\medskip

To continue, we need to control $I_\Omega$ uniformly in $\Omega$. We write that
\begin{equation*}
\begin{split}
\mathcal{I}_\Omega
&=\left\Vert  \sum_{\substack{R_1\sim R_2,\\
\vert R_1\vert=\vert R_2\vert=\Omega}} u_{R_1}\cdot u_{R_2}\right\Vert_{L^2_{x,y,t}}^2\\
&=\sum_{\substack{R_1\sim R_2,\,\, R_1^\prime\sim R_2^\prime,\\
\vert R_1\vert=\vert R_2\vert=\vert R_1^\prime\vert=\vert R_2^\prime\vert=\Omega}}\int_{\mathbf{R}^3_{x,y,t}} u_{R_1}\cdot u_{R_2}\cdot\overline{u_{R_1^\prime}\cdot u_{R_2^\prime}}\,\, dx dy dt\\
&=\sum_{\substack{R_1\sim R_2,\,\, R_1^\prime\sim R_2^\prime\\
\vert R_1\vert=\vert R_2\vert=\vert R_1^\prime\vert=\vert R_2^\prime\vert=\Omega}}\mathcal{I}_{R_1\sim R_2,R_1^\prime\sim R_2^\prime}.
\end{split}
\end{equation*}
To any rectangle $R=I\times J$, we associate its center $c = (c_x, c_y)$ and its scales $\ell_x(R)=\vert I\vert$ and $\ell_y(R)=\vert J\vert=\Omega/\vert I\vert$. For 2 rectangles $R$ and $R^\prime$ of equal area, we define their relative discrepancy by
\begin{equation*}
\delta(R,R^\prime)=\min\{\ell_x(R)/\ell_x(R^\prime),\ell_y(R)/\ell_y(R^\prime)\}.
\end{equation*}
We want to decompose $\mathcal{I}_\Omega$ according to the discrepancy of $R_1=I_1\times J_1$ and $R_1^\prime=I_1^\prime\times J_1^\prime$. Using scaling relation \eqref{1.8}, we may assume that $\Omega=1$, $\ell_x(R_1)=\ell_x(R_2)=1$ and that $\ell_x(R_1^\prime)\le\ell_y(R_1^\prime)$, so that $R_1^\prime$ is a $\delta\times\delta^{-1}$ rectangle, where $\delta=\delta(R_1,R_1^\prime)$.

\medskip

We first notice that, if $\mathcal{I}_{R_1\sim R_2,R_1^\prime\sim R_2^\prime}\ne 0$, we must have that
\begin{equation}\label{OldOrtho}
\begin{split}
\vert c_x(R_1)-c_x(R_1^\prime)\vert&\,\,\lesssim\,\, \ell_x(R_1)+\ell_x(R_1'),\\
\vert c_y(R_1)-c_y(R_1^\prime)\vert&\,\,\lesssim\,\, \ell_y(R_1)+\ell_y (R_1').
\end{split}
\end{equation}
and therefore, for any fixed $R_1$ and $\delta\gtrsim 1$, there can be only a bounded number of choices for $R_1^\prime$, so that
\begin{equation*}
\begin{split}
\mathcal{I}_1&\lesssim \sum_{\substack{R_1\sim R_2,\\\vert R_1\vert=\vert R_2\vert=1}}\Vert u_{R_1}u_{R_2}\Vert_{L^2_{x,y,t}}^2.
\end{split}
\end{equation*}
At this stage, we are in a similar position as in the elliptic case and we may follow the proof in \cite[Section 4.4]{KV}. From now on, we will focus on the case $\delta\ll1$.

\medskip

In the case $\delta\ll1$, we may in fact strengthen \eqref{OldOrtho}. Indeed for $\mathcal{I}_{R_1\sim R_2,R_1^\prime\sim R_2^\prime}$ to be different from $0$, we must have that
\begin{equation}\label{NewOrtho}
\begin{split}
\vert c_x(R_1)-c_x(R_1^\prime)\vert&\,\, \simeq \,\, \ell_x(R_1)+\ell_x(R_1'),\\
\vert c_y(R_1)-c_y(R_1^\prime)\vert&\,\, \simeq \,\, \ell_y(R_1)+\ell_y (R_1').
\end{split}
\end{equation}
This follows from the fact that (say)
\begin{equation*}
\begin{split}
&c_x(R_1)+c_x(R_2)-c_x(R_1^\prime)-c_x(R_2^\prime)\\
=&2\left[c_x(R_1)-c_x(R_1^\prime)\right]-\left[c_x(R_1)-c_x(R_2)\right]+\left[c_x(R_1^\prime)-c_x(R_2^\prime)\right],
\end{split}
\end{equation*}
and the last bracket is bounded by $24\delta$, while the second to last is bounded below by $6$; however, for $\mathcal{I}_{R_1\sim R_2,R_1^\prime\sim R_2^\prime}$ to be nonzero, there must exists $(\xi_1,\xi_2,\xi_1^\prime,\xi_2^\prime)\in R_1\times R_2\times R_1^\prime\times R_2^\prime$ such that
\begin{equation*}
\begin{split}
&\xi_1+\xi_2-\xi_1^\prime-\xi_2^\prime=0\qquad\hbox{and}\\
&\vert (\xi_1+\xi_2-\xi_1^\prime-\xi_2^\prime)-(c_x(R_1)+c_x(R_2)-c_x(R_1^\prime)-c_x(R_2^\prime))\vert\le 2+2\delta.
\end{split}
\end{equation*}
We will keep note of this by writing $R_1\simeq R_1^\prime$ (or sometimes $c(R_1)\simeq c(R_1^\prime)$) whenever \eqref{NewOrtho} holds for rectangles of equal area.

Recall that $R_1^\prime$ is a $\delta\times\delta^{-1}$ rectangle; we can decompose all rectangles into $\delta\times 1$ rectangles. We may then partition
\begin{equation}\label{FinerDec}
\begin{split}
R_1&=\bigcup_{a=1}^{\delta^{-1}}I_{1,a}\times J_1=\bigcup_{a=1}^{\delta^{-1}}R_{1,a},\qquad R_2=\bigcup_{\widetilde{a}=1}^{\delta^{-1}} I_{2,\widetilde{a}}\times J_2=\bigcup_{\widetilde{a}=1}^{\delta^{-1}}R_{2,\widetilde{a}},\\
R_1^\prime&=\bigcup_{b=1}^{\delta^{-1}}I_1^\prime\times J_{1,b}^\prime=\bigcup_{b=1}^{\delta^{-1}}R_{1,b}^\prime,\qquad R_2^\prime=\bigcup_{\widetilde{b}=1}^{\delta^{-1}}I_2^\prime\times J_{2,\widetilde{b}}^\prime=\bigcup_{\widetilde{b}=1}^{\delta^{-1}}R_{2,\widetilde{b}}^\prime
\end{split}
\end{equation}
and by orthogonality, we see that
\begin{equation*}
\begin{split}
\mathcal{I}_{R_1\sim R_2,R_1^\prime\sim R_2^\prime}&=\sum_{a\sim\widetilde{a},\,\, b\sim\widetilde{b}}\mathcal{I}_{R_{1,a}\sim R_{2,\widetilde{a}},R_{1,b}^\prime\sim R_{2,\widetilde{b}}^\prime}
\end{split}
\end{equation*}
where 
\begin{equation*}
a\sim\widetilde{a} \qquad \text{if and only if} \qquad | c_x (R_{1,a}) + c_x(R_{2,\widetilde{a}})  - c_x (R_1') - c_x (R_2') | \lesssim  \delta
\end{equation*}
and comparably in $y$ for $b\sim \widetilde{b}$. Thus, for fixed $R_1$, $R_2$, $R_1^\prime$ $R_2^\prime$, this gives two equivalence relations with $O(\delta^{-1})$ equivalence classes of (uniformly) bounded cardinality.

And proceeding as in \eqref{NewOrtho}, we can easily see that
\begin{equation}\label{NewOrtho2}
\begin{split}
\vert c_x(R_{1,a})-c_x(R^\prime_{1,b})\vert\gtrsim 1,\qquad \vert c_y(R_{1,a})-c_y(R^\prime_{1,b})\vert\gtrsim \delta^{-1},\\
\vert c_x(R_{2,\widetilde{a}})-c_x(R^\prime_{2,\widetilde{b}})\vert\gtrsim 1,\qquad \vert c_y(R^\prime_{1,\widetilde{a}})-c_y(R^\prime_{2,\widetilde{b}})\vert\gtrsim\delta^{-1}.
\end{split}
\end{equation}

At this point, we have extracted all the orthogonality we need and we are ready to proceed with the proof of Proposition \ref{PD1}.

\subsection{Proof of \eqref{NormIneq2}}

Using rescaling, we may assume that
\begin{equation}\label{NormalAss}
1=\sup_{R}\vert R\vert^{-\frac{1}{2}}\Vert e^{it\partial_x \partial_y}\phi_R\Vert_{L^\infty_{x,y,t}}.
\end{equation}
From the considerations above, we obtain the expression
\begin{equation}\label{L4Sum1}
\begin{split}
\Vert e^{it\partial_x \partial_y}\phi\Vert_{L^4_{x,y,t}}^4&\lesssim \sum_{\Omega}\sum_{\substack{R_1\sim R_2,\, R_1^\prime\sim R_2^\prime,\\\vert R_1\vert=\vert R_2\vert=\vert R_1^\prime\vert=\vert R_2^\prime\vert=\Omega}}\mathcal{I}_{R_1\sim R_2,R_1^\prime\sim R_2^\prime},\\
\end{split}
\end{equation}
where the rectangles satisfy the condition \eqref{NewOrtho}. In addition, for fixed rectangles $R_1\sim R_2$, $R_1^\prime\sim R_2^\prime$ of equal area $\Omega$, let $\delta=\delta(R_1,R_1^\prime)$. As explained above, for fixed $\delta=\delta_0=O(1)$, we are in a position similar to the elliptic case and we may follow \cite{KV} to get
\begin{equation*}
\begin{split}
&\sum_{\Omega}\sum_{\substack{R_1\sim R_2,\, R_1^\prime\sim R_2^\prime,\\ \vert R_1\vert=\vert R_2\vert=\vert R_1^\prime\vert=\vert R_2^\prime\vert=\Omega,\\
\delta(R_1,R_1^\prime)=\delta_0}}\vert \mathcal{I}_{R_1\sim R_2,R_1^\prime\sim R_2^\prime}\vert\\
\lesssim& \sum_{\Omega}\sum_{\substack{R_1\sim R_2,\\ \vert R_1\vert=\vert R_2\vert=\Omega}}\Vert u_{R_1}u_{R_2}\Vert_{L^2_{x,y,t}}^2\\
\lesssim& \left(\sup_{R}\vert R\vert^{-\frac{1}{2}}\Vert u_R\Vert_{L^\infty_{x,y,t}}\right)^\frac{4}{21}\sum_{\substack{R_1\sim R_2}}\vert R_1\vert^{\frac{2}{21}}\Vert u_{R_1}u_{R_2}\Vert_{L^\frac{40}{21}_{x,y,t}}^\frac{40}{21}\\
\lesssim&\sum_{\substack{R_1\sim R_2}}\left\{\vert R_1\vert^{-\frac{1}{20}}\Vert \widehat{\phi_{R_1}}\Vert_{L^\frac{20}{11}_{x,y}}\cdot\vert R_2\vert^{-\frac{1}{20}}\Vert\widehat{\phi_{R_2}}\Vert_{L^\frac{20}{21}_{x,y,t}}\right\}^\frac{40}{21},
\end{split}
\end{equation*}
where we have used Cauchy-Schwarz in the first inequality, H\"older's inequality in the second and \eqref{NormalAss} together with Lemma \ref{LemBSE3} in the last inequality. 
This gives a bounded contribution as in \eqref{NormIneq2} for any $p\le 80/21$.

\medskip

We need to adjust the above scheme when $\delta\ll1$. In the following, we let
\begin{equation*}
T_{\ll1}:=\sum_{\delta\ll1}\sum_{\Omega}\sum_{\substack{R_1\sim R_2,\, R_1^\prime\sim R_2^\prime,\\ \vert R_1\vert=\vert R_2\vert=\vert R_1^\prime\vert=\vert R_2^\prime\vert=\Omega,\\
\delta(R_1,R_1^\prime)=\delta}}\vert \mathcal{I}_{R_1\sim R_2,R_1^\prime\sim R_2^\prime}\vert\\
\end{equation*}
and to conclude the proof of \eqref{NormIneq2}, we need to prove that, for some $p>2$,
\begin{equation}\label{L4Sum22}
T_{\ll1}\lesssim \Vert \phi\Vert_{X_p}^\frac{80}{21}.
\end{equation}
We can now use the finer decomposition \eqref{FinerDec} to write
\begin{equation*}
\begin{split}
\mathcal{I}_{R_1\sim R_2,R_1^\prime\sim R_2^\prime}&=\sum_{a\sim\widetilde{a},\, b\sim\widetilde{b}}\mathcal{I}_{R_{1,a}\sim R_{2,\widetilde{a}},R_{1,b}^\prime\sim R_{2,\widetilde{b}}^\prime}
\end{split}
\end{equation*}
where the new rectangles satisfy \eqref{NewOrtho2}. Using Cauchy-Schwartz, then H\"older's inequality with \eqref{NormalAss}, we have that
\begin{equation*}
\begin{split}
\vert \mathcal{I}_{R_{1,a}\sim R_{2,\widetilde{a}},R_{1,b}^\prime\sim R_{2,\widetilde{b}}^\prime}\vert&\lesssim \Vert u_{R_{1,a}}\cdot u_{R^\prime_{1,b}}\Vert_{L^2_{x,y,t}}\cdot\Vert u_{R_{2,\widetilde{a}}}\cdot u_{R^\prime_{2,\widetilde{b}}}\Vert_{L^2_{x,y,t}}\\
&\lesssim (\delta\Omega)^\frac{2}{21}\Vert u_{R_{1,a}}\cdot u_{R^\prime_{1,b}}\Vert_{L^\frac{40}{21}_{x,y,t}}^{\frac{20}{21}}\cdot\Vert u_{R_{2,\widetilde{a}}}\cdot u_{R^\prime_{2,\widetilde{b}}}\Vert_{L^\frac{40}{21}_{x,y,t}}^\frac{20}{21} .
\end{split}
\end{equation*}
Now, using Lemma \ref{LemBSE3} with \eqref{NewOrtho2}, we obtain that
\begin{equation*}
\begin{split}
&\vert \mathcal{I}_{R_{1,a}\sim R_{2,\widetilde{a}},R_{1,b}^\prime\sim R_{2,\widetilde{b}}^\prime}\vert\\
\lesssim &(\delta\Omega)^\frac{2}{21}\cdot (\delta^{-1}\Omega)^{-\frac{6}{21}}\Vert \widehat{\phi_{R_{1,a}}}\Vert_{L^{\frac{20}{11}}_{x,y}}^\frac{20}{21}\Vert \widehat{\phi_{R_{2,\widetilde{a}}}}\Vert_{L^{\frac{20}{11}}_{x,y}}^\frac{20}{21}\Vert \widehat{\phi_{R^\prime_{1,b}}}\Vert_{L^{\frac{20}{11}}_{x,y}}^\frac{20}{21}\Vert \widehat{\phi_{R^\prime_{2,\widetilde{b}}}}\Vert_{L^{\frac{20}{11}}_{x,y}}^\frac{20}{21}.
\end{split}
\end{equation*}
Since $20/11 <40/21$ and since for fixed $a$, there are only a bounded number $\widetilde{a}$ such that $a\sim \widetilde{a}$, we can sum over $a$ to get
\begin{equation*}
\begin{split}
\sum_{a\sim\widetilde{a}}\Vert \widehat{\phi_{R_{1,a}}}\Vert_{L^{\frac{20}{11}}_{x,y}}^\frac{20}{21}\Vert \widehat{\phi_{R_{2,\widetilde{a}}}}\Vert_{L^{\frac{20}{11}}_{x,y}}^\frac{20}{21}&
\lesssim \Vert \widehat{\phi_{R_1}}\Vert_{L^\frac{20}{11}_{x,y}}^\frac{20}{21}\Vert \widehat{\phi_{R_{2}}}\Vert_{L^\frac{20}{11}_{x,y}}^\frac{20}{21}
\end{split}
\end{equation*}
and similarly for $b$, so that
\begin{equation}\label{L4Sum2.0}
\begin{split}
\vert \mathcal{I}_{R_1\sim R_2,R_1^\prime\sim R_2^\prime}\vert&\lesssim \delta^{\frac{8}{21}}\Omega^{-\frac{4}{21}}\Vert \widehat{\phi_{R_1}}\Vert_{L^\frac{20}{11}_{x,y}}^\frac{20}{21}\Vert \widehat{\phi_{R_{2}}}\Vert_{L^\frac{20}{11}_{x,y}}^\frac{20}{21}\Vert \widehat{\phi_{R_1^\prime}}\Vert_{L^\frac{20}{11}_{x,y}}^\frac{20}{21}\Vert \widehat{\phi_{R_{2}^\prime}}\Vert_{L^\frac{20}{11}_{x,y}}^\frac{20}{21}.
\end{split}
\end{equation}
In addition, for rectangles of fixed areas and sizes $\vert R_1\vert=\vert R_2\vert=\vert R_1^\prime\vert=\vert R_2^\prime\vert$, $\ell_x(R_1)=\ell_x(R_2)$, $\ell_x(R_1^\prime)=\ell_x(R_2^\prime)$ also satisfying \eqref{NewOrtho}, we may use Cauchy Schwartz in the summation over the centers to get
\begin{equation*}
\begin{split}
\sum_{\substack{R_1\sim R_2,\\ R_1^\prime\sim R_2^\prime}}\Vert \widehat{\phi_{R_1}}\Vert_{L^\frac{20}{11}_{x,y}}^\frac{20}{21}\Vert \widehat{\phi_{R_{2}}}\Vert_{L^\frac{20}{11}_{x,y}}^\frac{20}{21}\Vert \widehat{\phi_{R_1^\prime}}\Vert_{L^\frac{20}{11}_{x,y}}^\frac{20}{21}\Vert \widehat{\phi_{R_{2}^\prime}}\Vert_{L^\frac{20}{11}_{x,y}}^\frac{20}{21}&
\lesssim \sum_{R_1\simeq R_1^\prime}\Vert \widehat{\phi_{R_1}}\Vert_{L^\frac{20}{11}_{x,y}}^\frac{40}{21}\Vert \widehat{\phi_{R_1^\prime}}\Vert_{L^\frac{20}{11}_{x,y}}^\frac{40}{21}
\end{split}
\end{equation*}
where the sum is taken over all rectangles $R_1\simeq R_1^\prime$ of the given sizes satisfying \eqref{NewOrtho}.

\medskip

We can now get back to \eqref{L4Sum22} and use \eqref{L4Sum2.0} and the inequality above to get
\begin{equation*}
\begin{split}
T_{\ll1}&\lesssim\sum_{\Omega}\sum_A\sum_{\delta\le 1}\delta^{\frac{8}{21}}\Omega^{-\frac{4}{21}}\cdot \sum_{c_1\simeq c_1^\prime}\Vert \widehat{\phi_{R_1}}\Vert_{L^\frac{20}{11}_{x,y}}^\frac{40}{21}\Vert \widehat{\phi_{R_1^\prime}}\Vert_{L^\frac{20}{11}_{x,y}}^\frac{40}{21},
\end{split}
\end{equation*}
where we have parameterized the lengths of the rectangles by $\Omega=\vert R_1\vert=\vert R_1^\prime\vert$, $A=\ell_x(R_1)$ and $\delta=\ell_x(R_1^\prime)/\ell_x(R_1)$, and their centers by $c_1$, $c_1^\prime$.

Now for any $p > 2$ choose $0 < \theta(p) < 1$ such that
\begin{equation}\label{1.5m}
\frac{2 \theta}{p} + \frac{1 - \theta}{p} = \frac{21}{40}.
\end{equation}
and observe that $\theta(p) \searrow \frac{1}{20}$ as $p \searrow 2$. Then by interpolation,
\begin{equation*}
\begin{split}
&\sum_{\substack{\Omega,A,\\
c_1\simeq c_1^\prime}}\Omega^{-\frac{4}{21}}\Vert \widehat{\phi_{R_1}}\Vert_{L^\frac{20}{11}_{x,y}}^\frac{40}{21}\Vert \widehat{\phi_{R_1^\prime}}\Vert_{L^\frac{20}{11}_{x,y}}^\frac{40}{21}\\
\lesssim&\left(\sum_{\substack{\Omega,A,\\ c_1\simeq c_1^\prime}}\Omega^{-\frac{p}{10}}\Vert \widehat{\phi_{R_1}}\Vert_{L^\frac{20}{11}_{x,y}}^{p}\Vert \widehat{\phi_{R_1^\prime}}\Vert_{L^\frac{20}{11}_{x,y}}^{p}\right)^{\frac{40}{21}\frac{1-\theta}{p}}\left(\sum_{\substack{\Omega,A,\\ c_1\simeq c_1^\prime}}\Omega^{-\frac{p}{20}}\Vert \widehat{\phi_{R_1}}\Vert_{L^\frac{20}{11}_{x,y}}^\frac{p}{2}\Vert \widehat{\phi_{R_1^\prime}}\Vert_{L^\frac{20}{11}_{x,y}}^\frac{p}{2}\right)^{\frac{40}{21}\frac{2\theta}{p}}.
\end{split}
\end{equation*}
Now, on the one hand, we observe that for a fixed choice of scales ($\Omega$, $A$ and $\delta$)  and for each fixed $c_1$, there are at most $O(\delta^{-1})$ choices of $c_1^\prime$ satisfying \eqref{NewOrtho} so we obtain that
\begin{equation}\label{L4Sum2}
\begin{split}
\sum_{\substack{\Omega,A,\\ c_1\simeq c_1^\prime}}\Omega^{-\frac{p}{20}}\Vert \widehat{\phi_{R_1}}\Vert_{L^\frac{20}{11}_{x,y}}^\frac{p}{2}\Vert \widehat{\phi_{R_1^\prime}}\Vert_{L^\frac{20}{11}_{x,y}}^\frac{p}{2}&\lesssim \delta^{-1}\sum_{\substack{\Omega,A,c_1}}\Omega^{-\frac{p}{20}}\Vert \widehat{\phi_{R_1}}\Vert_{L^\frac{20}{11}_{x,y}}^p
\end{split}
\end{equation}
and the other sum can be handled in an easier way: using H\"older's inequality and forgetting about the relationship $c_1\simeq c_1^\prime$, we obtain that
\begin{equation}\label{L4Sum3}
\begin{split}
&\sum_{\Omega}\sum_A\sum_{c_1\simeq c_1^\prime}\Omega^{-\frac{p}{10}}\Vert \widehat{\phi_{R_1}}\Vert_{L^\frac{20}{11}_{x,y}}^{p}\Vert \widehat{\phi_{R_1^\prime}}\Vert_{L^\frac{20}{11}_{x,y}}^{p}\\
\lesssim&\sum_{\Omega}\sum_A\left(\sum_{c_1}\left\{\Omega^{-\frac{1}{20}}\Vert \widehat{\phi_{R_1}}\Vert_{L^\frac{20}{11}_{x,y}}\right\}^{p}\right)\cdot\left(\sum_{c_1^\prime}\left\{\Omega^{-\frac{1}{20}}\Vert \widehat{\phi_{R_1^\prime}}\Vert_{L^\frac{20}{11}_{x,y}}\right\}^{p}\right)\\
\lesssim& \left(\sum_{R}\vert R\vert^{-\frac{p}{20}}\Vert \widehat{\phi_R}\Vert_{L^{\frac{20}{11}}_{x,y}}^p\right)^2.
\end{split}
\end{equation}
Recall the definition \eqref{DefXp}. Combining \eqref{L4Sum2} and \eqref{L4Sum3}, we obtain
\begin{equation*}
\begin{split}
T_{\ll1}&\lesssim \sum_{\delta\le 1}\delta^{\frac{8}{21}}\cdot\left(\delta^{-1}\Vert \widehat{\phi}\Vert_{X_p}^p\right)^{\frac{80}{21}\frac{\theta}{p}}\left(\Vert \widehat{\phi}\Vert_{X_p}^{2p}\right)^{\frac{40}{21}\frac{1-\theta}{p}}\lesssim \sum_{\delta\le 1}\delta^{\frac{8}{21}(1-\frac{10\theta}{p})}\Vert\widehat{\phi}\Vert_{X_p}^{\frac{80}{21}}
\end{split}
\end{equation*}
and this is summable in $\delta$ for $2<p<40/17$ small enough. The proof of \eqref{NormIneq2} is thus complete and it remains to prove \eqref{NormIneq} which we now turn to.

\subsection{Proof of \eqref{NormIneq}}

We first state and prove the following simple result we will need in the proof.

\begin{lemma}\label{ModelLem}
Let $\mathcal{D}$ denote the set of dyadic intervals (on $\mathbf{R}$) and let $p>2$. For any $g\in C^\infty_c(\mathbf{R})$, there holds that
\begin{equation}
\sum_{I\in\mathcal{D}}\vert I\vert^{-\frac{p}{20}}\Vert g\mathfrak{1}_I\Vert_{L^\frac{20}{11}_x}^{p}\lesssim \Vert g\Vert_{L^2_x}^p.
\end{equation}
\end{lemma}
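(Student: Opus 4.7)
My plan: set $a_I := |I|^{-1/20}\|g\mathfrak{1}_I\|_{L^{20/11}}$; the goal is to show $\sum_{I\in\mathcal{D}} a_I^p \lesssim \|g\|_{L^2}^p$ whenever $p > 2$. The natural framework is that of a one-dimensional dyadic Besov / Carleson embedding, where one combines a per-interval Hölder estimate with the orthogonality coming from disjointness of dyadic intervals at each fixed scale.

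First I would establish the per-interval bound via Hölder's inequality. Since $\frac{1}{20/11}-\frac{1}{2}=\frac{1}{20}$, one obtains at once $\|g\mathfrak{1}_I\|_{L^{20/11}} \le |I|^{1/20}\|g\mathfrak{1}_I\|_{L^2}$, hence $a_I \le \|g\mathfrak{1}_I\|_{L^2} \le \|g\|_{L^2}$. Grouping the sum according to dyadic scale $|I|=2^k$ and using that intervals of the same size are disjoint gives the fundamental orthogonality inequality $\sum_{|I|=2^k} a_I^2 \le \sum_{|I|=2^k}\|g\mathfrak{1}_I\|_{L^2}^2 = \|g\|_{L^2}^2$ at each scale.

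The crucial gain from the hypothesis $p>2$ comes from a pointwise maximal-function bound. Rewrite $a_I = |I|^{1/2}\bigl(A_I |g|^{20/11}\bigr)^{11/20}$ where $A_I f = |I|^{-1}\int_I f$, and bound the average pointwise by the dyadic Hardy--Littlewood maximal function $M$: $a_I \le |I|^{1/2}(M|g|^{20/11}(x))^{11/20}$ for every $x\in I$. Setting $q := 11p/20 > 11/10 > 1$, integration in $x\in I$ and the HL maximal inequality on $L^q$ will convert the total sum into a bound involving $\int |g|^{20q/11} = \int |g|^p$. Combined with the interpolation $\|g\|_{L^p}^p \le \|g\|_{L^\infty}^{p-2}\|g\|_{L^2}^2$ and a scale-cutoff argument balancing $a_I \lesssim |I|^{1/2}\|g\|_{L^\infty}$ (good at small scales) against $a_I \lesssim |I|^{-1/20}\|g\|_{L^{20/11}}$ (good at large scales, using $g\in C^\infty_c$ and $\|g\|_{L^{20/11}} \lesssim |\mathrm{supp}(g)|^{1/20}\|g\|_{L^2}$), one recovers the clean bound by $\|g\|_{L^2}^p$.

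The main obstacle is the handling of the infinite dyadic sum over scales: the naive per-scale estimate from Hölder and orthogonality is uniform in $k$ and so fails to sum, and one must genuinely exploit $p>2$ to extract geometric decay as $|I|\to 0$ and $|I|\to\infty$. This is exactly what the maximal-function bound accomplishes, since the condition $q=11p/20>1$ is precisely what is needed both for the HL maximal inequality on $L^q$ and for the passage from an $\ell^2$ orthogonality estimate to an $\ell^p$ estimate via the embedding $\ell^2\hookrightarrow\ell^p$ valid for $p\ge 2$. The overall structure mirrors the dyadic proof of the Carleson embedding theorem.
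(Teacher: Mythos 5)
Your setup (the per-interval H\"older bound $a_I\le\|g\mathfrak{1}_I\|_{L^2}$, the fixed-scale orthogonality, and the recognition that $p>2$ must be exploited to sum over scales) is fine, but the mechanism you propose for actually performing the sum over scales does not close. Your maximal-function bound gives, at scale $|I|=2^k$, $\sum_{|I|=2^k}a_I^p\lesssim 2^{k(p/2-1)}\|g\|_{L^p}^p$, which decays only as $k\to-\infty$, while your large-scale bound gives $\sum_{|I|=2^k}a_I^p\lesssim 2^{-kp/20}\|g\|_{L^{20/11}}^p$, which decays only as $k\to+\infty$. After optimizing the cutoffs, these two regimes meet with total $\lesssim\|g\|_{L^2}^p$ only when the interpolation inequality $\|g\|_{L^2}^{22}\le\|g\|_{L^{20/11}}^{20}\|g\|_{L^\infty}^{2}$ is essentially saturated, i.e.\ when $g$ behaves like a constant multiple of a characteristic function. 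In general there is an intermediate range of scales, of length about $\log\bigl(\|g\|_{L^{20/11}}^{20}\|g\|_{L^\infty}^2/\|g\|_{L^2}^{22}\bigr)$, which can be arbitrarily large (take $g$ to be a tall narrow spike plus a long low plateau, each of unit $L^2$ norm), and on this range you only have the scale-uniform bound $\|g\|_{L^2}^p$, whose sum diverges. Relatedly, any intermediate bound involving $\|g\|_{L^\infty}$ or $|\mathrm{supp}(g)|$ cannot in general be converted into $\lesssim_p\|g\|_{L^2}^p$ with a constant uniform over $g\in C^\infty_c$.

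The missing idea is that the splitting must be performed on the function, with a truncation height tied to the scale, rather than on the set of scales with cutoffs tied to global norms of $g$. The paper normalizes $\|g\|_{L^2}=1$, fixes the scale $A=|I|$, and writes $g=g^++g^-$ with $g^+=g\,\mathfrak{1}_{\{|g|>A^{-1/2}\}}$. For $g^+$ one uses $\ell^{20/11}\subset\ell^p$ over all pairs $(A,I)$, and the geometric sum $\sum_{A>|g(x)|^{-2}}A^{-1/11}\lesssim|g(x)|^{2/11}$ upgrades $\int|g|^{20/11}$ to $\int|g|^2$; for $g^-$ one uses H\"older, $\|g^-\mathfrak{1}_I\|_{L^{20/11}}\le|I|^{\frac{11}{20}-\frac1p}\|g^-\mathfrak{1}_I\|_{L^p}$, and the complementary sum $\sum_{A<|g(x)|^{-2}}A^{(p-2)/2}\lesssim|g(x)|^{-(p-2)}$ upgrades $\int|g|^p$ to $\int|g|^2$. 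Both pieces are summable over all scales simultaneously precisely because the threshold $A^{-1/2}$ moves with the scale; a single global cutoff between small and large scales cannot reproduce this.
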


\begin{proof}[Proof of Lemma \ref{ModelLem}]
We may assume that $\Vert g\Vert_{L^2_x}=1$. For fixed $A$, we let $\mathcal{D}_A$ denote the set of dyadic intervals of length $A$ and we decompose
\begin{equation*}
\begin{split}
g=g^++g^-,\qquad g^+(x)=g(x)\mathfrak{1}_{\{\vert g(x)\vert>A^{-\frac{1}{2}}\}},\qquad g^-(x)=g(x)\mathfrak{1}_{\{\vert g(x)\vert\le A^{-\frac{1}{2}}\}}.
\end{split}
\end{equation*}
On the one hand, using that $\ell^{\frac{20}{11}}\subset\ell^p$,
\begin{equation*}
\begin{split}
\sum_A\sum_{I\in\mathcal{D}_A}\vert I\vert^{-\frac{p}{20}}\Vert g^+\mathfrak{1}_I\Vert_{L^\frac{20}{11}_x}^p&\lesssim \left(\sum_AA^{-\frac{1}{11}}\sum_{I\in\mathcal{D}_A}\Vert g^+\mathfrak{1}_I\Vert_{L^\frac{20}{11}_x}^\frac{20}{11}\right)^{\frac{11 }{20}p}\\
&\lesssim \left(\sum_AA^{-\frac{1}{11}}\int_{\mathbf{R}}\vert g\vert^\frac{20}{11}\mathfrak{1}_{\{\vert g(x)\vert>A^{-\frac{1}{2}}\}}dx\right)^{\frac{11 }{20}p}\\
&\lesssim \left(\int_{\mathbf{R}}\vert g\vert^\frac{20}{11}\cdot\left(\sum_{A>\vert g(x)\vert^{-2}}A^{-\frac{1}{11}}\right)dx\right)^{\frac{11 }{20}p}\lesssim 1,
\end{split}
\end{equation*}
while, for the other sum, we use H\"older's inequality to get
\begin{equation*}
\begin{split}
\sum_A\sum_{I\in\mathcal{D}_A} A^{-\frac{p}{20}}\Vert g^-\mathfrak{1}_I\Vert_{L^\frac{20}{11}_x}^p
&\lesssim \sum_A\sum_{I\in\mathcal{D}_A} A^{-\frac{p}{20}}\Vert g^-\mathfrak{1}_I\Vert_{L^p}^p\cdot \vert I\vert^{(\frac{11}{20}-\frac{1}{p})p}\\
&\lesssim \int_{\mathbf{R}}\vert g(x)\vert^p \cdot\sum_{\{A<\vert g(x)\vert^{-2}\}}A^{\frac{p-2}{2}}dx\\
&\lesssim \int_{\mathbf{R}}\vert g(x)\vert^2dx\lesssim 1
\end{split}
\end{equation*}
and the proof is complete.
\end{proof}

Now, we proceed to prove \eqref{NormIneq}.

\begin{proof}[Proof of \eqref{NormIneq}] 
Recall $\mathcal{D}$ stand for the set of dyadic intervals and $\mathcal{D}_A$ for the set of dyadic intervals of length $A$. We want to prove that
\begin{equation*}
\begin{split}
\sum_{I\in\mathcal{D}}\vert I\vert^{-\frac{p}{20}}\sum_{J\in\mathcal{D}}\vert J\vert^{-\frac{p}{20}}\Vert f \mathfrak{1}_{I\times J} \Vert_{L^\frac{20}{11}_{x,y}}^p \lesssim \Vert f\Vert_{L^2_{x,y}}^p.
\end{split}
\end{equation*}
We claim that, for any fixed interval $I$,
\begin{equation}\label{TnterClaim1}
\sum_{J\in\mathcal{D}}\vert J\vert^{-\frac{p}{20}}\Vert f\mathfrak{1}_{I\times J}\Vert_{L^\frac{20}{11}_{x,y}}^p \lesssim \Vert f\mathfrak{1}_{I\times\mathbf{R}}\Vert_{L^\frac{20}{11}_xL^2_y}^p.
\end{equation}
Once this is proved, we may simply apply Lemma \ref{ModelLem} to the function
\begin{equation*}
g(x):=\Vert f(x,\cdot)\Vert_{L^2_y}
\end{equation*}
to finish the proof.

\medskip

From now on $I$ denotes a fixed interval and $f$ is a function supported on $\{x\in I\}$, i.e. $f=f\mathfrak{1}_{I\times\mathbf{R}}$. The proof of \eqref{TnterClaim1} is a small variation on the proof of Lemma \ref{ModelLem}. Fix a dyadic number $B$ and let
\begin{equation*}
c_B=c_B(x)=B^{-\frac{1}{2}}\Vert f(x,\cdot)\Vert_{L^2_y}
\end{equation*}
and decompose accordingly\footnote{Note that $f(x,y)=0$ whenever $\Vert f(x,\cdot)\Vert_{L^2_y}=0$, so that $c_B(x)>0$ on the support of $f^+$.}
\begin{equation*}
f=f^++f^-,\quad f^+=f\mathfrak{1}_{\{\vert f(x,y)\vert>c_B(x)\}},\quad f^-=f\mathfrak{1}_{\{\vert f(x,y)\vert\le c_B(x)\}}.
\end{equation*}
We then compute that
\begin{equation*}
\begin{split}
& \sum_B\sum_{J\in\mathcal{D}_B} B^{-\frac{p}{20}}\Vert f^+\mathfrak{1}_{I\times J}\Vert_{L^\frac{20}{11}_{x,y}}^p  \lesssim\left(\sum_B\sum_{J\in\mathcal{D}_B} B^{-\frac{1}{11}}\Vert f^+\mathfrak{1}_{I\times J}\Vert_{L^\frac{20}{11}_{x,y}}^{\frac{20}{11}}\right)^{\frac{11 }{20}p}  \\
& \hspace{2cm} \lesssim\left(\sum_B B^{-\frac{1}{11}}\int_{I_x}\int_{\mathbf{R}_y} \vert f^+\vert^\frac{20}{11} dy\, dx\right)^{\frac{11 }{20}p}\\
& \hspace{2cm}  \lesssim\left(\int_{I_x}\int_{\mathbf{R}_y} \vert f^+\vert^\frac{20}{11}\cdot \sum_{\{B:\vert f(x,y)\vert\ge c_B(x)\}}B^{-\frac{1}{11}} dy\, dx\right)^{\frac{11}{20}p}\\
& \hspace{2cm}  \lesssim\left(\int_{I_x}\int_{\mathbf{R}_y} \vert f(x,y)\vert^\frac{20}{11}\cdot \left(\frac{ \vert f(x,y)\vert}{\Vert f(x,\cdot)\Vert_{L^2_y}}\right)^\frac{2}{11} dy\, dx\right)^{\frac{11}{20}p}\\
& \hspace{2cm} \lesssim \left(\int_{I_x}\Vert f(x,\cdot)\Vert_{L^2_{y}}^{-\frac{2}{11}}\int_{\mathbf{R}_y}\vert f(x,y)\vert^2dy\, dx\right)^{\frac{11}{20}p}\\
 & \hspace{2cm}  \lesssim \Vert f\Vert_{L^\frac{20}{11}_xL^2_y}^p ,
\end{split}
\end{equation*}
 in the penultimate line, we note that though there is a negative power of the $L^2_y$ norm, the product of the two quantities is well-defined, especially as we can assume $f \in C^\infty_c$.  Also, we have used the embedding $\ell^1 \subset  \ell^{\frac{11}{20}p}$ in the first inequality, the fact that dyadic intervals of a fixed length tile $\mathbf{R}$ in the second inequality, and we have summed a geometric series in the fourth inequality. Now for the second part, we compute that
\begin{equation*}
\begin{split}
\sum_B\sum_{J\in\mathcal{D}_B} &B^{-\frac{p}{20}}\Vert f^-\mathfrak{1}_{I\times J}\Vert_{L^\frac{20}{11}_{x,y}}^p  \lesssim \sum_B\sum_{J\in\mathcal{D}_B} B^{-\frac{p}{20}}B^{p(\frac{11}{20}-\frac{1}{p})}\Vert f^-\mathfrak{1}_{I\times J}\Vert_{L^p_y(J:L^\frac{20}{11}_x(I))}^p\\
&\lesssim \sum_B B^{ \frac{p -2 }{2}  }\int_{\mathbf{R}_y}\left(\int_{I_x} \vert f^-(x,y)\vert^\frac{20}{11}dx\right)^{\frac{11}{20}p}dy\\
&\lesssim \int_{\mathbf{R}_y}\sum_B \left(B^{\frac{p -2 }{2}\frac{20}{11}\frac{1}{p}}\int_{I_x} \vert f^-(x,y)\vert^\frac{20}{11}dx\right)^{\frac{11}{20}p}dy\\
&\lesssim \int_{\mathbf{R}_y}\left(\int_{I_x} \sum_{B}B^{\frac{p -2 }{p}\frac{10}{11}}\vert f^-(x,y)\vert^\frac{20}{11}dx\right)^{\frac{11}{20}p}dy,
\end{split}
\end{equation*}
where we have used H\"older's inequality in the first line and the inclusion $\ell^1\subset\ell^{\frac{11}{20}p}$ in the fourth line. Now, since $f^-$ is supported where
\begin{equation*}
B\le\left(\frac{\Vert f(x,\cdot)\Vert_{L^2_y}}{\vert f(x,y)\vert}\right)^2,
\end{equation*}
summing in $B$ gives
\begin{equation*}
\begin{split}
&\sum_B\sum_{J\in\mathcal{D}_B} B^{-\frac{p}{20}}\Vert f^-\mathfrak{1}_{I\times J}\Vert_{L^\frac{20}{11}_{x,y}}^p\\
\lesssim& \int_{\mathbf{R}_y}\left(\int_{I_x} \Vert f(x,\cdot)\Vert_{L^2_y}^{\frac{20}{11}\frac{p-2}{p}}\vert f^-(x,y)\vert^{\frac{20}{11}\frac{2}{p}}dx\right)^{\frac{11}{20}p}dy.
\end{split}
\end{equation*}
Using Minkowski inequality on the function
\begin{equation*}
h(x,y)=\Vert f(x,\cdot)\Vert_{L^2_y}^{\frac{20}{11}\frac{p-2}{p}}\vert f^-(x,y)\vert^{\frac{20}{11}\frac{2}{p}},
\end{equation*}
we obtain
\begin{equation*}
\begin{split}
&\sum_B\sum_{J\in\mathcal{D}_B} B^{-\frac{p}{20}}\Vert f^-\mathfrak{1}_{I\times J}\Vert_{L^\frac{20}{11}_{x,y}}^p\\
\lesssim& \left(\int_{I_x}\left(\int_{\mathbf{R}_y}h^{\frac{11}{20}p}dy\right)^{\frac{20}{11}\frac{1}{p}}dx\right)^{\frac{11}{20}p}\\
\lesssim&  \left(\int_{I_x}\left(\int_{\mathbf{R}_y}\vert f(x,y)\vert^2dy\right)^{\frac{20}{11}\frac{1}{p}}\Vert f(x,\cdot)\Vert_{L^2_y}^{\frac{20}{11}\frac{p-2}{p}}  dx\right)^{\frac{11}{20}p}\\
\lesssim& \left(\int_{I_x}\Vert f(x,\cdot)\Vert_{L^2_y}^\frac{20}{11}dx\right)^{\frac{11}{20}p},
\end{split}
\end{equation*}
which proves \eqref{TnterClaim1}. Thus the proof is complete.

\end{proof}

\vspace{.5cm}

\section{The Profile Decomposition and Applications}\label{SecMBUS}

The profile decomposition then follows from Proposition $\ref{PD1}$ in the usual way following the techniques in the proof of Theorems $4.25$ (the Inverse Strichartz Inequality) and $4.26$ (Mass Critical Profile Decomposition) from \cite{KV}, for instance.  We note that it is the proof of the Inverse Strichartz Inequality that requires the local smoothing estimates as in Lemma \ref{LocSmoothing} to establish pointwise a.e. convergence of profiles to an element of $L^2_{x,y}$ through compactness considerations, otherwise the proof follows mutatis mutandis.  Once the Inverse Strichartz Inequality is established, the proof of the Profile Decomposition follows verbatim.  

Suppose $g_{n}^{j} = g(x_{n}^{j}, y_{n}^{j}, \lambda_{1, n}^{j}, \lambda_{2, n}^{j}, \xi_{n}^{j})$ is the group whose action on functions is given by
\begin{align*}
& (g_{n}^{j})^{-1} f = (\lambda_{n, 1}^{j} \lambda_{n, 2}^{j})^{1/2} e^{-i \xi_{n, 1}^{j} (\lambda_{n, 1}^{j} x + x_{n}^{j})} e^{-i \xi_{n, 2}^{j} (\lambda_{n, 2}^{j} x + y_{n}^{j})} \\
& \hspace{2.5cm}  \times [f_{n}](\lambda_{n, 1}^{j} x + x_{n}^{j}, \lambda_{n, 2}^{j} y + y_{n}^{j}).
\end{align*}

The profile decomposition gives the following.

\begin{theorem}\label{ProfileDecTheorem}
Let $\| u_{n} \|_{L^{2}_{x,y}(\mathbf{R}^{2})} \leq A$ be a sequence that is bounded $L^{2}_{x,y}(\mathbf{R}^{2})$. Then possibly after passing to a subsequence, for any $1 \leq j < \infty$ there exist $\phi^{j} \in L^{2}_{x,y}(\mathbf{R}^{2})$, $(t_{n}^{j}, x_{n}^{j}, y_{n}^{j}) \in \mathbf{R}^{3}$, $\xi_{n}^{j} \in \mathbf{R}^{2}$, $\lambda_{n, 1}^{j}$, $\lambda_{n, 2}^{j} \in (0, \infty)$ such that for any $J$,

\begin{equation}
u_{n} = \sum_{j = 1}^{J} g_{n}^{j} e^{i t_{n}^{j} \partial_{x} \partial_y} \phi^{j} + w_{n}^{J},
\end{equation}

\begin{equation}\label{strichartz}
\lim_{J \rightarrow \infty} \limsup_{n \rightarrow \infty} \| e^{it \partial_x \partial_y  } w_{n}^{J} \|_{L_{x,y,t}^{4}} = 0,
\end{equation}

\noindent such that for any $1 \leq j \leq J$,

\begin{equation}
e^{-i t_{n}^{j} \partial_x \partial_y} (g_{n}^{j})^{-1} w_{n}^{J} \rightharpoonup 0,
\end{equation}

\noindent weakly in $L^{2}_{x,y}(\mathbf{R}^{2})$, 

\begin{equation}\label{decouple}
\lim_{n \rightarrow \infty} \left(\| u_{n} \|_{L^{2}_{x,y}}^{2} - \sum_{j = 1}^{J} \| \phi^{j} \|_{L^{2}_{x,y}}^{2} - \| w_{n}^{J} \|_{L^{2}_{x,y}}^{2}\right) = 0,
\end{equation}

\noindent and for any $j \neq k$,

\begin{equation}\label{AsympOrtho}
\begin{split}
& \lim_{n \rightarrow \infty}\left[ \left\vert\ln \left( \frac{\lambda_{n, 1}^{j}}{\lambda_{n, 1}^{k}} \right)\right\vert + \left\vert \ln \left( \frac{\lambda_{n, 2}^{j}}{\lambda_{n, 2}^{k}}  \right)\right\vert 
+ \frac{|t_{n}^{j} (\lambda_{n,1}^{j} \lambda_{n,2}^{j}) - t_{n}^{k} (\lambda_{n, 1}^{k} \lambda_{n, 2}^{k})|}{(\lambda_{n, 1}^{j} \lambda_{n, 2}^{j} \lambda_{n, 1}^{k} \lambda_{n, 2}^{k})^{1/2}}  \right. \\ 
& \hspace{1cm} + (\lambda_{n, 1}^{j} \lambda_{n, 1}^{k})^{1/2} |\xi_{n, 1}^{j} - \xi_{n, 1}^{k}| 
+ (\lambda_{n, 2}^{j} \lambda_{n, 2}^{k})^{1/2} |\xi_{n, 2}^{j} - \xi_{n, 2}^{k}|\\
& \hspace{2cm} + \frac{|x_{n}^{j} - x_{n}^{k} - 2 t_{n}^{j} (\lambda_{n, 1}^{j} \lambda_{n, 2}^{j})(\xi_{n, 1}^{j} - \xi_{n, 1}^{k})|}{(\lambda_{n, 1}^{j} \lambda_{n, 1}^{k})^{1/2}}  \\
& \left. \hspace{2cm} + \frac{|y_{n}^{j} - y_{n}^{k} - 2 t_{n}^{j} (\lambda_{n, 1}^{j} \lambda_{n, 2}^{j})(\xi_{n, 2}^{j} - \xi_{n, 2}^{k})|}{(\lambda_{n, 2}^{j} \lambda_{n, 2}^{k})^{1/2}} \right]   = \infty.
\end{split}
\end{equation}

\end{theorem}

\subsection{Minimal mass blow-up solutions}

As an application of the profile decomposition, we turn to a calculation that for instance originated in \cite{Ker1,MerleVega}.  Namely we construct a minimal mass solution to \eqref{1.1} which is a solution $u$ of minimal mass such that there exists a time $T^*$ such that
\begin{equation*}
\int_{-T^*}^{T^*} \int_{\mathbf{R}^2_{x,y}} |u |^4 \, dx dy dt= + \infty.
\end{equation*}
In other words, it is a solution of least mass for which the small data global argument fails. 

It turns out that if $u$ is a minimal mass blowup solution to $\eqref{1.1}$ then $u$ lies in a compact subset of $L^{2}_{x,y}(\mathbf{R}^{2})$ modulo the symmetry group $g$; more precisely, following \cite[Chapter $5$, Theorem $5.2$]{KV}, we can establish the following theorem.

\begin{theorem}
Suppose $u$ is a minimal mass blowup solution to $(\ref{1.1})$ on a maximal time interval $I$ that blows up in both time directions.  That is, $I$ is an open interval and for any $t_{0} \in I$,

\begin{equation}\label{InfiniteNorm}
\int_{t_{0}}^{\sup(I)} \int |u(x,y,t)|^{4} dx dy dt, \hspace{5mm} \int_{\inf(I)}^{t_{0}} \int |u(x,y,t)|^{4} dx dy dt = \infty.
\end{equation}

\noindent Then there exist $\lambda_{1}, \lambda_{2} : I \rightarrow (0, \infty)$, $\widetilde{\xi} : I \rightarrow \mathbf{R}^{2}$, $\widetilde{x}, \widetilde{y} : I \rightarrow \mathbf{R}$, such that for any $\eta > 0$ there exists $C(\eta) < \infty$ such that

\begin{equation}
\aligned
\int_{|x - \widetilde{x}(t)| > \frac{C(\eta)}{\lambda_{1}(t)}} |u(x,y,t)|^{2} dx dy + \int_{|y - \widetilde{y}(t)| > \frac{C(\eta)}{\lambda_{2}(t)}} |u(x,y,t)|^{2} dx dy \\ + \int_{|\xi_{1} - \widetilde{\xi}_{1}(t)| > C(\eta) \lambda_{1}(t)} |\hat{u}(\xi,t)|^{2} d\xi + \int_{|\xi_{2} - \widetilde{\xi}_{2}(t)| > C(\eta) \lambda_{2}(t)} |\hat{u}(\xi,t)|^{2} d\xi < \eta.
\endaligned
\end{equation}
\end{theorem}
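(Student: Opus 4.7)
The approach is to turn the non-scattering hypothesis (\ref{InfiniteNorm}) into a rigidity statement on time slices $u(\cdot,t)$ via the profile decomposition just established, and then convert pre-compactness of the orbit modulo the symmetry group into the quantitative tightness estimate by Riesz--Kolmogorov.

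First, I would fix any sequence $\{t_n\}\subset I$ and consider the $L^2$-bounded sequence $\{u(\cdot,t_n)\}$ (bounded by mass conservation). Applying the profile decomposition, after extracting a subsequence, yields profiles $\phi^j\in L^2$, symmetry parameters $g_n^j$, and a remainder $w_n^J$ satisfying the Strichartz decoupling (\ref{strichartz}), mass decoupling (\ref{decouple}) and asymptotic orthogonality (\ref{AsympOrtho}). Passing to a further subsequence, I may also assume that for each $j$ the rescaled time $s_n^j:=t_n^j\,\lambda_{n,1}^j\lambda_{n,2}^j$ either converges to some $\tau^j\in\mathbb R$ or $|s_n^j|\to\infty$.

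The central step is to reduce to a single profile with bounded time shift. Suppose two or more profiles were nontrivial; then by (\ref{decouple}) each $\|\phi^j\|_{L^2}^2<\|u\|_{L^2}^2$ strictly, and by the definition of a \emph{minimal} blow-up solution, the nonlinear evolution $v^j$ associated to $\phi^j$ (built either from $\phi^j$ as initial data when $\tau^j\in\mathbb R$, or via a forward/backward wave operator when $|s_n^j|\to\infty$) is global with finite $L^4_{x,y,t}$ norm. Summing rescaled, translated and Galilean-boosted copies of $v^j$ produces an approximate solution of (\ref{1.1}) with initial data close in $L^2$ to $u(\cdot,t_n)$; the orthogonality (\ref{AsympOrtho}) is precisely what kills the cross terms in the cubic nonlinearity in the dual Strichartz norm (using the Galilean invariance of (\ref{LinStric}) and the bilinear Lemma \ref{BS2}). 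The standard mass-critical long-time perturbation lemma, whose hypotheses are the linear bounds already in Section \ref{props}, then promotes the approximate solution to $u$ itself, giving finite $L^4_{x,y,t}$ norm on $\mathbb R$ and contradicting (\ref{InfiniteNorm}). The same mechanism rules out a single surviving profile with $|s_n^1|\to\infty$ (scattering on a half-line) and rules out a nonvanishing remainder via (\ref{strichartz}). Thus only $j=1$ survives, $s_n^1\to\tau^1\in\mathbb R$, and
\[
u(\cdot,t_n)=g_n^1\,\widetilde\phi+o_{L^2}(1),\qquad \widetilde\phi:=e^{i\tau^1\partial_x\partial_y}\phi^1.
\]

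Since this holds for every sequence $\{t_n\}\subset I$, the orbit $\{(g(t))^{-1}u(\cdot,t):t\in I\}$ is pre-compact in $L^2$, where the parameters $\lambda_1(t),\lambda_2(t),\widetilde x(t),\widetilde y(t),\widetilde\xi(t)$ are read off from the single-profile reduction applied at each $t$ (using a measurable selection among the finite-dimensional parameter set). The Riesz--Kolmogorov characterization of $L^2$ pre-compactness states that for every $\eta>0$ there is $C(\eta)$ for which the spatial and Fourier tails of every element of the orbit drop below $\eta$ outside a common box; unwinding the action of $g(t)$, which is a product of translation, dilation, modulation and Galilean boost, converts this into the stated inequality on $u(\cdot,t)$ and $\widehat u(\cdot,t)$. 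The main obstacle is thus the stability/long-time perturbation lemma for the mass-critical hyperbolic NLS used in the reduction step: one must track Galilean parameters coming from $|\xi_n^j-\xi_n^k|$ in (\ref{AsympOrtho}) which are not present in the $\dot H^{1/2}_h$ warm-up of Section \ref{super}. Once this perturbation lemma is in place, the argument follows the standard Ker\"aani--Kenig--Merle template.
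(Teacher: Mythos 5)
Your proposal is correct and follows essentially the same route as the paper: apply the profile decomposition to $u(t_n)$, use minimality plus the asymptotic orthogonality \eqref{AsympOrtho} and a perturbation argument to show that any configuration other than a single full-mass profile with bounded time shift would force $u$ to have finite $L^4_{x,y,t}$ norm, contradicting \eqref{InfiniteNorm}, and then convert the resulting pre-compactness modulo the symmetry group into the tightness estimate. The only cosmetic difference is that the paper establishes the vanishing of the cross terms $\Vert v_n^j v_n^k\Vert_{L^2_{x,y,t}}\to 0$ by approximation with compactly supported functions rather than by invoking Lemma \ref{BS2}.
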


\begin{proof} Take a sequence $t_{n} \in I$. Then conservation of mass implies that after passing to a subsequence we may make a profile decomposition of $u(t_{n}) = u_{n}$. If there exists $j$ such that, along a subsequence, $t_{n}^{j} \rightarrow \pm \infty$, say $t_{n}^{j} \rightarrow \infty$, then

\begin{equation}
\lim_{n \rightarrow \infty} \| e^{it \partial_x\partial_y} (g_{n}^{j} e^{i t_{n}^{j} \partial_x \partial_y} \phi^{j}) \|_{L_{x,y,t}^{4}([0, \infty) \times \mathbf{R}^{2})} = 0,
\end{equation}

\noindent so combining perturbative arguments, $(\ref{decouple})$, and the fact that $u$ is a blowup solution with minimal mass then $u$ scatters forward in time to a free solution. Thus, we may assume that for each $j$, $t_{n}^{j}$ converges to some $t^{j} \in \mathbf{R}$. Then taking $e^{it^{j} \partial_x \partial_y} \phi^{j}$ to be the new $\phi^{j}$, we may assume that each $t_{n}^{j} = 0$.

\noindent Now suppose that

\begin{equation}
\sup_{j} \| \phi^{j} \|_{L^{2}_{x,y}(\mathbf{R}^{2})} < \| u(t) \|_{L^{2}_{x,y}(\mathbf{R}^{2})}.
\end{equation}

\noindent Then if $v^{j}$ is the solution to $\eqref{1.1}$ with initial data $\phi^{j}$, since $\| u(t) \|_{L^{2}}$ is the minimal mass for blowup to occur, each $v^{j}$ scatters both forward and backward in time, with

\begin{equation}\label{bound}
 \| v^{j} \|_{L_{x,y,t}^{4}(\mathbf{R} \times \mathbf{R}^{2})}^2 \lesssim \Vert \phi^j\Vert_{L^2_{x,y}}^2 < \infty,\qquad  \hbox{uniformly in } j.
\end{equation}

\noindent Then if $v_{n}^{j}$ is the solution to $\eqref{1.1}$ with initial data $g_{n}^{j} \phi^{j}$, $v_{n}^{j} = g_{n}^{j} (v^{j}( (\lambda_{n,1}^j\lambda_{n,2}^j)^{-1}  t))$.
We note that, for $v$ either a profile $v_n^\ell$ or the remainder $w_n^J$,
\begin{equation}\label{Convto01}
\| v_{n}^{j} v_{n}^{k} v \|_{L_{x,y,t}^{\frac43}}  \le \Vert v_n^jv_n^k\Vert_{L^2_{x,y,t}}\Vert v \Vert_{L^4_{x,y,t}}.
\end{equation}
In addition, $\Vert v\Vert_{L^4_{x,y,t}}$ remains bounded either by \eqref{bound} (for $v_n^\ell$) or as a consequence of the small data theory and \eqref{strichartz} (for $w_n^J$).

By approximation by compactly supported functions, it is easy to see that, if $j\ne k$,
\begin{equation}\label{Convto02}
\Vert v_n^jv_n^k\Vert_{L^2_{x,y,t}}\to0
\end{equation}
when $n \rightarrow \infty$ as a consequence of \eqref{AsympOrtho}.

As a result, using simple perturbation theory, we obtain that, for $J$ large enough,
\begin{equation*}
\begin{split}
\Vert u(t_n+t)-\sum_{j=1}^Jv_n^j(t)\Vert_{L^4_{x,y,t}}\lesssim 1
\end{split}
\end{equation*}
and using again \eqref{Convto01}-\eqref{Convto02}, we obtain that
\begin{equation*}
\begin{split}
\Vert \sum_{j=1}^Jv_n^j(t)\Vert_{L^4_{x,y,t}}^4\lesssim \sum_{j=1}^J\Vert v_n^j\Vert_{L^4_{x,y,t}}^4\lesssim \sum_{j=1}^J\Vert \phi^j\Vert_{L^2_{x,y}}^2<\infty.
\end{split}
\end{equation*}
which, together with \eqref{decouple} contradicts \eqref{InfiniteNorm}.

Thus, after reordering we should have $\| \phi^{1} \|_{L^{2}_{x,y}} = \| u(t) \|_{L^{2}_{x,y}}$ and $\phi^{j} = 0$ for any $j \geq 2$. But this holds if and only if $u(t)$ lies in a set $G K$, where $G$ is the group generated by $g_{n}^{j}$ and $K$ is a compact set in $L^{2}$. This completes the proof of the theorem.
\end{proof}

\appendix

\section{Extremizers for Strichartz Estimates for \eqref{HLS}}
\label{extreme}

We end with a few considerations on extremizers for the Strichartz inequality \eqref{LinStric}, that is functions $f$ of unit $L^2_{x,y}$-norm and constant $\overline{C}$ such that
\begin{equation}\label{MaxPb}
\begin{split}
\Vert e^{it\partial_x \partial_y}f\Vert_{L^4_{x,y,t}}=\overline{C}:=\sup\{\Vert e^{it\partial_x \partial_y}g\Vert_{L^4_{x,y,t}}:\,\, \Vert g\Vert_{L^2_{x,y}}=1\}.\\
\end{split}
\end{equation}
The original version of this paper stated erroneously that Gaussians were optimizers for the Strichartz norm above, and gave a corresponding numerical value for $\overline{C}$. In fact, it was later proved in \cite{COS} that Gaussians are not critical points of the Strichartz norm and thus cannot be optimizers. One can however use the profile decomposition for Theorem \ref{ProfileDecTheorem} to prove existence of an extremizer (see \cite{Sh} for a similar proof).

\begin{proposition}
There exists $f\in L^2_{x,y}(\mathbf{R}^2)$ such that
\begin{equation}
\label{Maximizers}
\Vert e^{it\partial_x \partial_y}f\Vert_{L^4_{x,y,t}} = \overline{C} \Vert f\Vert_{L^2_{x,y}}.
\end{equation}
where $\overline{C}$ is given in \eqref{MaxPb}.
\end{proposition}

The exact nature of the extremizer $f$ above remains mysterious.
Preliminary numerical investigations confirm that the optimizer should be a genuine function of $x$ and $y$ and suggests that it has nice decay and smoothness properties.

We reproduce some plots of the amplitude of an extremizer and its Fourier transform below. These are obtained by maximizing the Strichartz norm among functions in the span of the first $25$ Hermite functions with unit $L^2$ norm. Some care has been given to obtain accurate computations and minimize boundary effects, but we do not mean that the result is anything but suggestive. However, it seems to indicate that extremizers are indeed smooth and localized and that they can be chosen {\it real valued}, which is why we have not reproduced their phase.

\begin{figure}[H]
\begin{subfigure}{.5\textwidth}
  \centering
  \includegraphics[width=\linewidth]{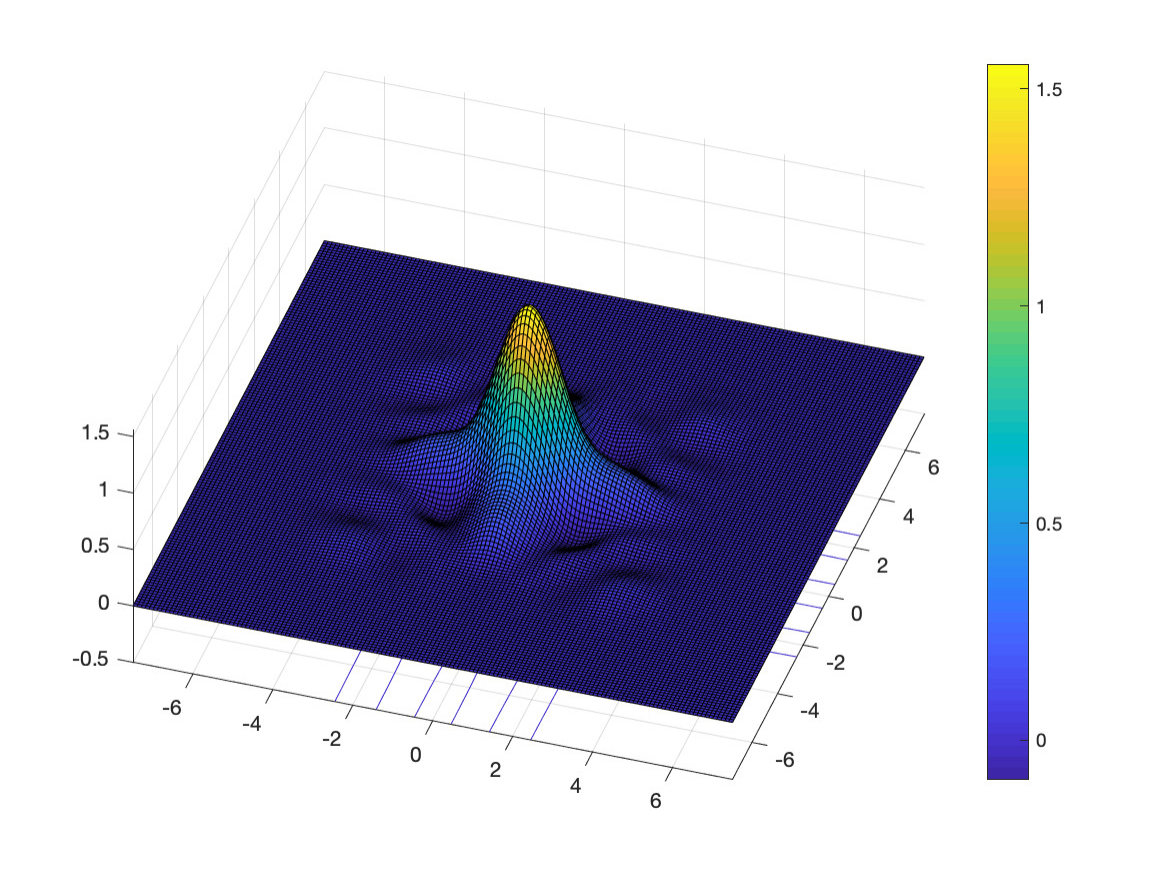}
  \caption{Plot of $\vert f\vert$}
\end{subfigure}%
\begin{subfigure}{.5\textwidth}
  \centering
  \includegraphics[width=\linewidth]{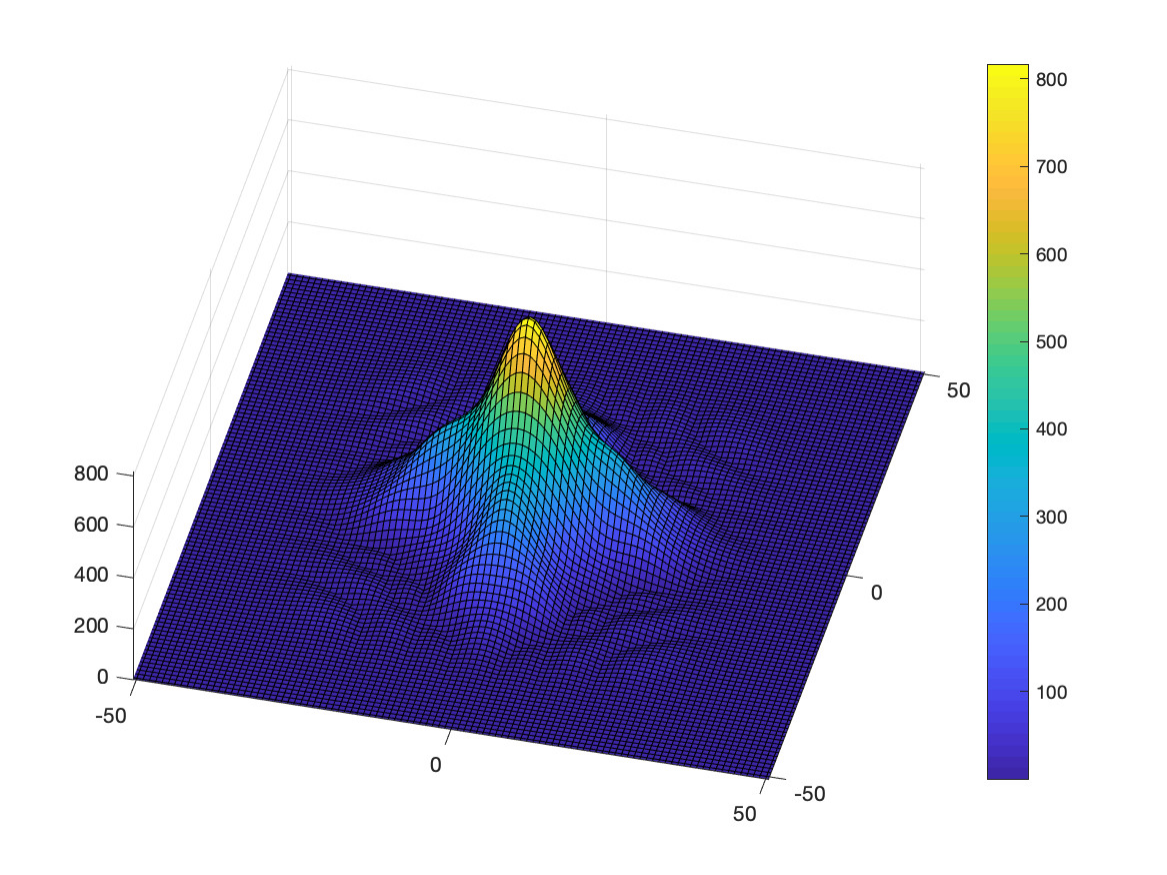}
  \caption{Plot of $\vert\widehat{f}\vert$}
\end{subfigure}
\\
\begin{subfigure}{.5\textwidth}
  \centering
  \includegraphics[width=\linewidth]{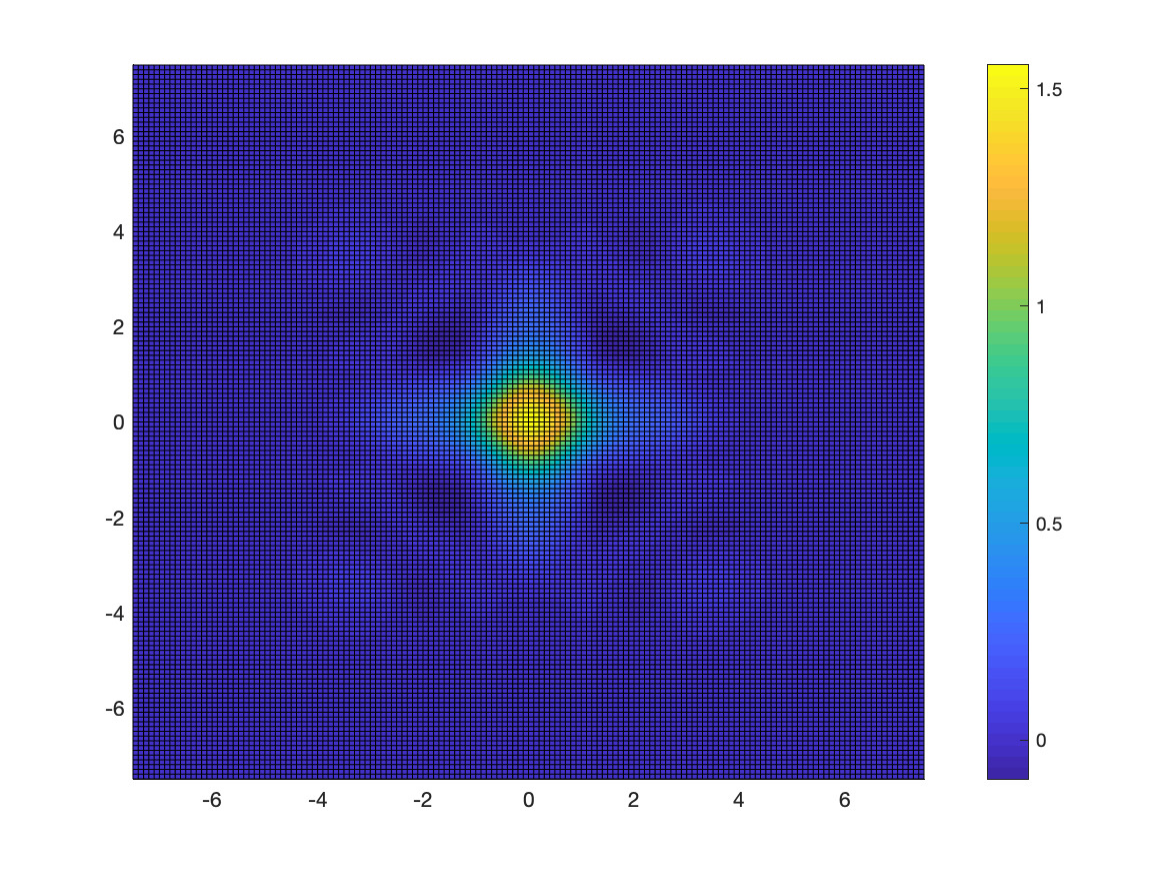}
  \caption{Heat map of $\vert f\vert$}
\end{subfigure}%
\begin{subfigure}{.5\textwidth}
  \centering
  \includegraphics[width=\linewidth]{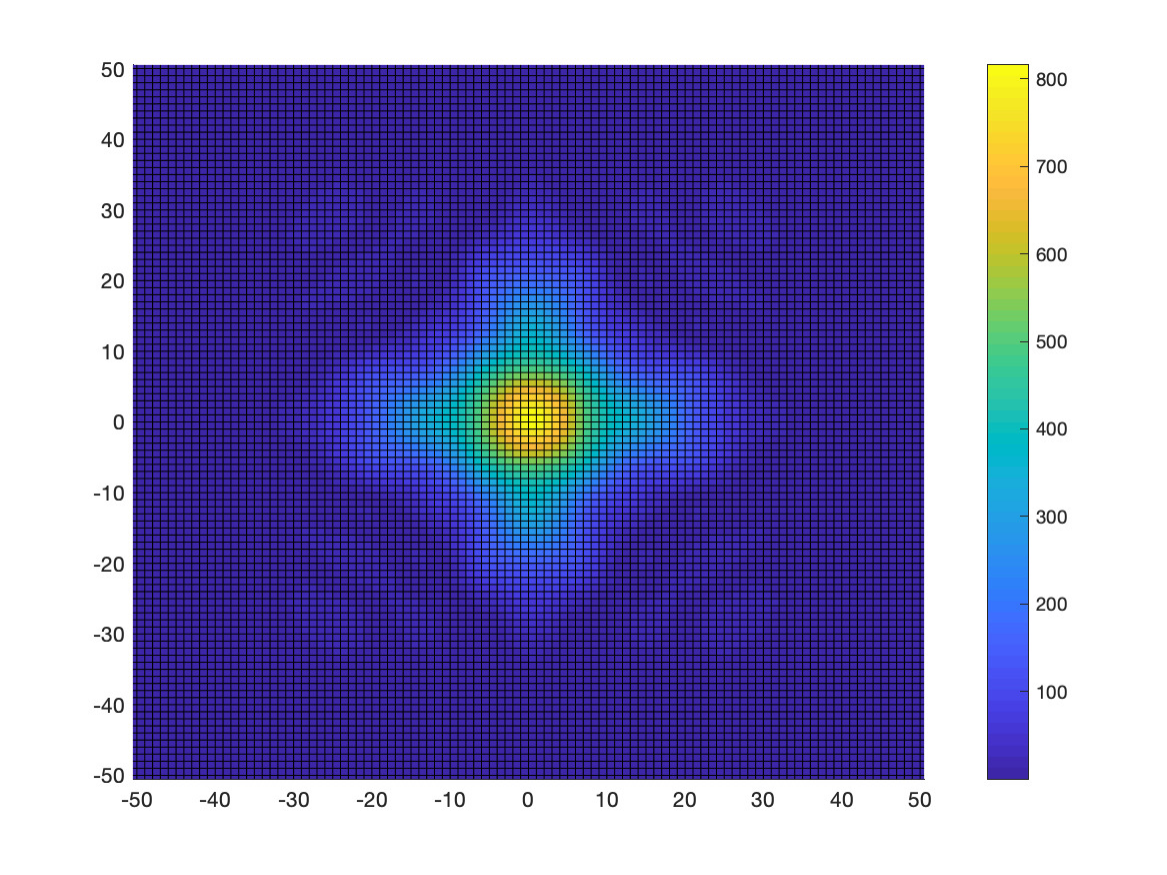}
  \caption{Heat map of $\vert\widehat{f}\vert$}
\end{subfigure}
\caption{Various views of an optimizer $f$ for \eqref{MaxPb}.}
\label{fig:fig}
\end{figure}


\begin{thebibliography}{MA}




\bibitem{BG} {\sc H. Bahouri, and P. G\'erard}, {\em High frequency approximation of solutions to critical
nonlinear wave equations}, Amer. J. Math. {\bf 121} (1999), 131-175.

\bibitem{Bou1} {\sc J. Bourgain}.
{\em Refinements of Strichartz inequality and applications to 2D-NLS with critical nonlinearity},
International Mathematics Research Notices, {\bf 8} (1998), 253-283.

\bibitem{COS} {\sc E. Carneiro, L. Oliviera and M. Sousa}. {\em Gaussians never extremize Strichartz inequalities for hyperbolic paraboloids}, preprint (2019).

\bibitem{Cazenave} {\sc T. Cazenave}. {\em Semilinear Schr\"odinger Equations}, Courant Lecture Notes in Mathematics, {\bf 10}, AMS Publishing, Providence (2003). 

\bibitem{Chihara} {\sc H. Chihara}. {\em Smoothing effects of dispersive pseudodifferential equations},
Comm. Partial Differential Equations, {\bf 27}, No. 9-10 (2002), 1953-2005. 

\bibitem{Dod1} {\sc B. Dodson},  {\em Global well-posedness and scattering for the defocusing, $L^{2}$-critical, nonlinear Schr\"odinger equation when $d = 2$}, Duke Math J., {\bf 165}, No. 10 (2016), 3435-3516.  

\bibitem{Dod2} {\sc B. Dodson},  {\em Global well-posedness and scattering for the mass critical nonlinear Schr\"odinger equation with mass below the mass of the ground state}, Adv. Math., {\bf 285} (2015), 1589-1618. 

\bibitem{FV} {\sc L. Fanelli and N. Visciglia}.  {\em The lack of compactness in the Sobolev-Strichartz inequalities}. J. Math. Pures et Appl, {\bf 99}, No. 3, (2013), 309-320.

\bibitem{Gal1} {\sc I. Gallagher}. {\em Profile decomposition for solutions of the Navier-Stokes equations}, Bull.
Soc. Math. France, {\bf 129} (2001), 285-316.

\bibitem{G1} {\sc P. G\'erard},  {\em Description du defaut de compacite de l'injection de Sobolev,} ESAIM Control Optim. Calc. Var. {\bf 3} (1998), 213-233.

\bibitem{GS} {\sc M. Ghidaglia and J.C. Saut},  {\em Nonexistence of Travelling Wave Solutions to Nonelliptic Nonlinear Schrodinger Equation}, J. Nonlinear Sci. {\bf 6} (1996), 139--145.

\bibitem{HuZh} {\sc D. Hundertmark and V. Zharnitsky},  {\em On sharp Strichartz inequalities in low dimensions}, International Mathematics Research Notices {\bf 2006} (2006), 34080.

\bibitem{Iftimie} {\sc Dragos Iftimie},  {\em A Uniqueness Result for the Navier--Stokes Equations with Vanishing Vertical Viscosity}, SIAM Journal on Mathematical Analysis {\bf 33}, No. 6 (2002),  1483--1493.

\bibitem{KM1} {\sc C.E. Kenig and F. Merle}, {\em Global well-posedness, scattering and blow-up for the energy-critical, focusing, non-linear Schr\"odinger equation in the radial case}, Inventiones Mathematicae {\bf 166}, No. 3 (2006), 645-675.

\bibitem{KPRV1} {\sc C. E. Kenig, G. Ponce, C. Rolvung, and L. Vega}. {\em The
    general quasilinear ultrahyperbolic Schr\"odinger equation}.
  Adv. Math. {\bf 196}, No. 2 (2005), 402-433.

\bibitem{Ker1} {\sc S. Keraani}, {\em On the Defect of Compactness for the Strichartz Estimates of the
Schr\"odinger Equations}, Journ. Diff. Eq. {\bf 175} (2001), 353-392.

\bibitem{KV} {\sc R. Killip and M. Visan},  {\em Nonlinear Schr\"odinger Equations at Critical Regularity}, {\em Clay Summer School Lecture Notes}, (2008). 

\bibitem{Lee}  {\sc S. Lee},  {\em Bilinear restriction estimates for surfaces with curvatures of different signs}, Transactions of the American Mathematical Society {\bf 358}, No. 8 (2006), 3511-3533.

\bibitem{LP} {\sc F. Linares and G. Ponce}, {\em On the Davey-Stewartson systems}, Annales de l'Institut Henri Poincare (C) Non Linear Analysis. {\bf 10}, No. 5 (1993), 523-548.

\bibitem{Lions1} {\sc P.-L. Lions}. {\em The concentration-compactness principle in the calculus of variations. The limit case, Part I},  Revista Matem\'atica Aberoamericana, {\bf 1}, Issue 1 (1985), 145-201.

\bibitem{Lions2} {\sc P.-L. Lions}. {\em The concentration-compactness principle in the calculus of variations. The limit case, Part II},  Revista Matem\'atica Aberoamericana, {\bf 1}, Issue 2 (1985), 45-121.

\bibitem{KNZ} {\sc P. Kevrekidis, A. Nahmod and C. Zeng}.  {\em Radial standing and self-similar waves for the hyperbolic cubic NLS in 2D}, Nonlinearity {\bf 24} (2011), No. 5, 1523-1538. 


\bibitem{MMT3} {\sc J.L. Marzuola, J. Metcalfe, D. Tataru}.  {\em Quasilinear Schr\"odinger equation I:  small data and quadratic interactions}.  Adv. Math., {\bf 231}, No. 2 (2012), 1151-1172. 

\bibitem{MerleVega} {\sc F. Merle and L. Vega}.  {\em Compactness at blow - up time for $L^{2}$ solutions of the critical nonlinear Schr{\"o}dinger equation in 2D}.  International Mathematics Research Notices, No. 8 (1998), 399-425.

\bibitem{MVV} {\sc A. Moyua, A. Vargas and L. Vega}, {\em Schr\"odinger maximal function and restriction properties of the Fourier transform}, International Mathematics Research Notices, 1996, 793--815.



\bibitem{RV} {\sc K.M. Rogers and A. Vargas}.  {\em A refinement of the Strichartz inequality on the saddle and applications}.  J. Functional Anal., {\bf 241}, No. 2 (2006), 212-231. 
 
\bibitem{RS}  {\sc M. Ruzhansky and M. Sugimoto}.  {\em Smoothing properties of evolution equations via canonical transforms and comparison principle}, Proceedings of the London Mathematical Society {\bf 105}, No. 2 (2012), 393-423. 

\bibitem{S-T} {\sc Monique Sabl\'e-Tougeron}, {\em R\'egularit\'e microlocale pour des problemes aux limites non lin\'eaires}, Ann. Inst. Fourier {\bf 36}, No. 1 (1986), 39--82.

\bibitem{Sh} S. Shao, {\em Maximizers for the Strichartz and the Sobolev-Strichartz inequalities for the Schr\"odinger equation}, Electronic Journal of Differential Equations (EJDE), (2009)
Volume: 2009, page Paper No. 03, 13 p.

\bibitem{SS} {\sc C. Sulem and P. Sulem}.  {\em Nonlinear Schr\"odinger Equations}, Springer (1999).

\bibitem{Tao1} {\sc Terence Tao}. {\em A sharp bilinear restriction estimate for paraboloids}. Geometric \& Functional Analysis, {\bf 13}, No. 6 (2003), 1359-1384.

\bibitem{T2} {\sc N. Totz}.  {\em A justification of the modulation approximation to the 3D full water wave problem}.  Communications in Mathematical Physics, {\bf 335}, No. 1 (2015), 369-443.

\bibitem{T1} {\sc N. Totz}.  {\em Global Well-Posedness of 2D Non-Focusing Schrodinger Equations via Rigorous Modulation Approximation}.  J. Diff. Eq., {\bf 261}, No. 4 (2016), 2251-2299.

\bibitem{TW} {\sc N. Totz and S. Wu}.  {\em A rigorous justification of the modulation approximation to the 2D full water wave problem}.  Comm. Math. Phys., {\bf 310}, No. 3 (2012), 817-883.
    
\bibitem{V1} {\sc A. Vargas}.  {\em Restriction theorems for a surface with negative
curvature}.  Math. Z., {\bf 249}, (2005), 97-111.

\end{thebibliography}
\end{document}